\documentclass[12pt]{amsart}
\usepackage{amssymb,enumitem,amsthm,amsmath,bibentry,thmtools,ulem,mathtools,xurl,microtype}
\usepackage[pdftex,colorlinks=true,linkcolor=Black]{hyperref}
\usepackage[usenames,dvipsnames]{xcolor}

\numberwithin{equation}{section}

\newcommand{\la}{{\lambda}}

\newcommand{\ddt}{{\frac{d}{dt}}}

\newcommand{\norm}[1]{\left\lVert#1\right\rVert}
\newcommand{\Lnorm}[2]{\left\lVert#1\right\rVert_{L^#2}}

\newcommand{\be}{\begin{equation}}
\newcommand{\ee}{\end{equation}}
\newcommand{\ba}{\begin{aligned}}
\newcommand{\ea}{\end{aligned} }

\theoremstyle{plain}
\newtheorem{theorem}{Theorem}[section]
\newtheorem{lemma}[theorem]{Lemma}

\theoremstyle{definition}

\theoremstyle{remark}
\newtheorem{remark}[theorem]{Remark}

\begin{document}

\title[Global Regularity for Axisymmetric NS with NHL BC]{Global Regularity of Axisymmetric Navier--Stokes Equations with NHL Boundary Condition under a Critical Smallness Condition
}

\thanks{}

\author[T. L. Chan]{Tsz-Lik Chan}

\address[]{Department of mathematics, University of California, Riverside, CA 92521,
USA}

\email{tchan204@ucr.edu}

\subjclass[2020]{35Q30, 76N10}

\keywords{  Navier--Stokes equations,  axial symmetry and global regularity. }

\begin{abstract}
We investigate the global regularity problem for the three-dimensional incompressible Navier--Stokes equations restricted to axisymmetric flows in a finite cylinder $D = \{(r,\theta,x_3): 0 \le r \le 1, 0 \leq \theta < 2\pi,\ 0 \le x_3 \le 1\}$, subject to the Navier--Hodge--Lions (NHL) boundary condition. While global existence of smooth solutions is known in the swirl-free case, the presence of swirl ($v_\theta \neq 0$) introduces vortex stretching that may potentially lead to finite-time singularity formation. 

In this work, we prove that if the initial data satisfy a scaling-invariant smallness condition of the form
\[
\frac{9C_1C_3^{1/2}}{4}\left(\frac{1}{2}\|V_0\|_{L^4}^4 + \|\Omega_0\|_{L^2}^2\right)^{1/4}\|\Gamma_0\|_{L^4} \le \frac{1}{4},
\]
where $V = v_\theta/\sqrt{r}$, $\Omega = \omega_\theta/r$, $\Gamma = r v_\theta$, and $C_1, C_3$ are explicit constants given in this paper, then the solution remains globally regular for all time. The proof proceeds via a transformed system for $\Omega$ and $V$, leveraging a maximum principle for $\Gamma$, refined Agmon-type inequalities to control $\|v_r/r\|_{L^\infty}$, and delicate boundary analysis of the finite cylinder geometry. Key energy estimates yield $L^\infty_T L^4_x$ bounds for all velocity components, which fall within the regularity class, thereby precluding finite-time blow-up. The result extends the known criticality theory for axisymmetric Navier--Stokes flows to the setting of NHL boundary conditions, which are physically relevant for flows with stress-free or slip-type constraints on lateral and horizontal boundaries.
\end{abstract}
\maketitle

\section{Introduction}
The incompressible Navier--Stokes equations are
\begin{equation} \begin{dcases}
    \label{eq:NS}
    &\partial_t v + (v \cdot \nabla) v = \nu \Delta v - \nabla p + f, \\
    & \nabla \cdot v = 0.
\end{dcases} \end{equation}
where \(v\) is the velocity field, \(p\) is the pressure, $f$ is the external force which will be taken as $0$ in the analysis and \(\nu > 0\) is the kinematic viscosity which will be taken as $1$ in the analysis.  

The incompressible Navier--Stokes equations model the motion of viscous fluids with constant density (e.g., water). The equations encode two fundamental physical principles: conservation of momentum and conservation of mass. 

The first equation in \eqref{eq:NS} gives the conservation of momentum, or Newton's second law. The meaning of each term in the first equation in \eqref{eq:NS} is:
\begin{itemize}
    \item $\partial_t v + (v \cdot \nabla) v $: total fluid acceleration (local change $+$ convective transport by flow),
    \item $-\nabla p$: pressure gradients driving fluid from high to low pressure,
    \item $\nu \Delta v$: viscous diffusion smoothing velocity gradients (internal friction),
    \item $f$: external forces acting throughout the fluid.
\end{itemize}

The second equation in \eqref{eq:NS} gives the conservation of mass. If the solution is regular, then the divergence-free condition is equivalent to saying that the flow map $X(t,a)$, which represents the position of particle initially at $a$ at time $t$, is volume-preserving. That is the measure $ |\{X(t,a): a \in V \}|$ for some given set $V$ is constant \cite{bedrossian2022mathematical}. 

The kinematic viscosity $\nu$ determines how ``sticky" a fluid is. The inverse of the viscosity is proportional to another important quantity in engineering and other fields called the \textit{Reynolds number}. If the Reynolds number is high then the more prominent term in the equation will be the momentum term. If the Reynolds number is low, then the ﬂuid will be dominated by viscosity.

The 2D global regularity problem was resolved by Leray for $\mathbb{R}^2$ in \cite{leray1933etude} and in bounded domains by Ladyzhenskaya in \cite{ladyzhenskaya1959solution}. For a bounded domain $\Omega \subset \mathbb{R}^2$ with smooth boundary, consider the initial-boundary value problem:
\begin{equation}
\begin{dcases}
\displaystyle \partial_t v + (v \cdot \nabla)v = -\nabla p + \nu \Delta v + f, & \text{in } \Omega \times (0,T], \\
\nabla \cdot v = 0, & \text{in } \Omega \times (0,T], \\
v = 0, & \text{on } \partial\Omega \times (0,T], \\
v(\cdot,0) = v_0, & \text{in } \Omega,
\end{dcases}
\end{equation}
where $f$ is an external force. Assuming $v_0 \in H^1_0(\Omega)$ with $\nabla \cdot v_0 = 0$ and $f \in L^2(0,T; L^2(\Omega))$, Ladyzhenskaya proved the existence of a unique global regular solution. Consequently, solutions remain smooth for any time $T > 0$ without finite-time blow-up or singularity formation. The results in \cite{leray1933etude,ladyzhenskaya1959solution} not only provided the rigorous foundation for 2D fluid modeling (critical for weather prediction, oceanography, and engineering simulations) but also introduced analytical techniques.

In the three-dimensional case, the concept of \textit{Leray--Hopf weak solutions} constitutes a cornerstone in the mathematical analysis of the incompressible Navier--Stokes equations. For a domain $\Omega \subseteq \mathbb{R}^3$ (either $\mathbb{R}^3$ itself or a bounded domain with smooth boundary), given an initial divergence-free velocity field $v_0 \in L^2_{\sigma}(\Omega)$ (where $L^2_{\sigma}$ denotes the closure of smooth divergence free fields in $L^2$) and kinematic viscosity $\nu > 0$, a Leray--Hopf weak solution is a vector field $v$ satisfying: (i) $v$ is in the energy space: $v \in L^\infty(0,T; L^2_{\sigma}(\Omega)) \cap L^2(0,T; H^1(\Omega))$ for all $T>0$; (ii) the Navier--Stokes equations hold in the distributional sense; (iii) $v$ converges to $v_0$ weakly in $L^2$ as $t \to 0^+$; and (iv) $v$ satisfies the \textit{energy inequality}
\[
E(v):= \|v(t)\|_{L^2(\Omega)}^2 + 2\nu \int_0^t \|\nabla v(s)\|_{L^2(\Omega)}^2  \mathrm{d}s \leq \|v_0\|_{L^2(\Omega)}^2 \quad \text{for a.e. } t \geq 0,
\]
which provides the essential a priori bounds. 

Historically, Leray in the seminal paper \cite{leray1934mouvement} first established global existence of weak solutions for the Cauchy problem in $\mathbb{R}^3$ and introduced the energy inequality—referring to such solutions as \textit{``solutions turbulentes''}. Hopf extended this framework to bounded domains with Dirichlet boundary conditions (\cite{hopf1950anfangswertaufgabe}), proving existence while preserving the energy inequality. The combined legacy of their foundational work—Leray's global existence theory and Hopf's treatment of boundary effects—led to the modern designation of \textit{Leray--Hopf weak solutions}, which remain central to existence theory, partial regularity results, and the ongoing study of uniqueness and smoothness in fluid dynamics.

Dimensionality fundamentally alters the behavior of nonlinear evolutionary systems since the two-dimensional regularity problem stands in sharp contrast to the three-dimensional case. While the existence of global Leray--Hopf weak solutions is known, whether such solutions remain smooth for all time, or whether finite-time singularities can develop from smooth initial data, remains unresolved.  

The natural scaling for the incompressible Navier--Stokes equations is given by $$v_\lambda = \lambda v(\lambda x, \lambda^2 t), \quad p_\lambda = \lambda^2 p(\lambda x, \lambda^2 t),$$ which preserves the structure of the equations. Under this scaling, the energy norm $E(\cdot)$ scales as $$E(v_\lambda) = \la^{2-n}E(v).$$ Consequently, the energy is invariant under scaling (critical) precisely when $n=2$. For $n>2$ (e.g., when $n=3$, the scaling exponent is $2-3=-1$), $\lim_{\lambda \rightarrow0}E(v_\la)=\infty$; this case is supercritical. For the supercritical case, we lose control of the micro behavior of the solution using the energy $E(\cdot)$. While for $n<2$ (e.g., $n=1$), we have $\lim_{\lambda \rightarrow0}E(v_\la)=0$ and this case is subcritical. 

An important criterion for regularity is the Ladyzhenskaya--Prodi--Serrin regularity criterion given in \cite{serrin1962interior}, \cite{prodi1959teorema} and \cite{ladyzhenskaya1967uniqueness}. If a weak solution $v$ satisfies
\[
v \in L^{p}\bigl(0,T; L^{q}(\Omega)\bigr), \quad 
\frac{2}{p} + \frac{3}{q} \leq 1, \quad 2 \le p \le \infty, 3 < q \le \infty,
\]
then $v$ is smooth on $(0,T]$. The endpoint case of $q=3, p=\infty$ can be found in \cite{escauriaza2003}.  


Significant attention has been directed toward \textbf{axisymmetric flows}, for exmaple in \cite{chae2002regularity,chen2009lower1,chen2009lower2,hou2008global,lei2015axially,lei2017criticality,liu2009characterization,zhang2022bounded}, where the velocity and pressure fields are invariant under rotation about a fixed axis. In cylindrical coordinates \((r,\theta,z)\), the velocity takes the form
\[
v = v_r(r,x_3,t)\,e_r + v_\theta(r,x_3,t)\,e_\theta + v_3(r,x_3,t)\,e_3,
\]
where $e_r,e_\theta,e_3$ are the basis in axisymmetric coordinates given by
\be
e_r = (x_1/r, x_2/r, 0), \ e_\theta = ( -x_2/r, x_1/r, 0), \ e_3 = (0,0,1),
\ee
and we have assumed the Cartesian coordinates to be $(x_1,x_2,x_3)$ and $r= \sqrt{x_1^2+x_2^2}$.

Let $D=\{ (r,\theta,x_3): 0\leq r \leq 1, 0 \leq \theta < 2\pi, 0 \leq x_3 \leq 1 \}$ be a cylinder in the cylindrical coordinates. We let the vertical boundary be 
\be
\partial^V D = \{D \cap \{r=1\} \}
\ee
and the horizontal boundary be 
\be
\partial^H D = \{ D \cap \{ x_3 \in \{0,1 \}\} \}.
\ee
We consider the axisymmetric incompressible Navier--Stokes equations with the boundary condition called: the Navier--Hodge--Lions (NHL) boundary condition. Let $\omega = \nabla \times v$ be the vorticity, the NHL boundary condition is given by $v \cdot n = 0$ and $\omega \times n = 0$ ($\omega$ parallel to $n$) on $\partial D$ where $n$ is the unit outward normal vector on $\partial D$. Writing out the axisymmetric
incompressible Navier--Stokes equation and the NHL boundary condition in the coordinate basis we have
\begin{equation} \begin{dcases}
    \label{eq:ASNS}
	&(\Delta - \frac{1}{r^2}) v_r - (v_r \partial_r + v_3 \partial_{x_3} ) v_r + \frac{v_\theta^2}{r} - \partial_r p - \partial_t v_r = 0, \\
	&(\Delta - \frac{1}{r^2}) v_\theta - (v_r \partial_r + v_3 \partial_{x_3} ) v_\theta - \frac{v_\theta v_r}{r} - \partial_t v_\theta = 0, \\
	&\Delta v_3 - (v_r \partial_r + v_3 \partial_{x_3} ) v_3 - \partial_{x_3} p - \partial_t v_3 = 0, \\
    &\frac{1}{r} \partial_r (r v_r) + \partial_{x_3}v_3 =0, \\
	& v_3 = 0, \quad  \omega_r=\omega_\theta =0, \quad \text{on $\partial^H D \times (0,T],$} \\
	&v_r =0, \quad \omega_3=\omega_\theta =0, \quad \text{on $\partial^V D \times (0,T],$} 
\end{dcases} \end{equation}
where $\omega_r,\omega_\theta,\omega_{x_3}$ are the components in the vorticity $\omega$ given by $$\omega = \nabla \times v=(-\partial_{x_3} v_\theta) e_r + (\partial_{x_3}v_r - \partial_r v_3 ) e_\theta + (\frac{v_\theta}{r} + \partial_r v_\theta)e_3 .$$

The derivation of the system \eqref{eq:ASNS} can be found in \cite{zhang2022bounded}. NHL boundary condition was studied in the exterior of a cone \cite{li2024finite}. In \cite{ozanski2025regularity}, the authors obtain a regularity criterion with a slightly different boundary condition.

Historically, it has been known since the work of Ladyzhenskaya, Ukhovskii, and Yudovich that in the \textit{swirl-free} case \(v_\theta \equiv 0\), finite-energy solutions are globally smooth \cite{ukhovskii1968axially,ladyzhenskaya1968unique}. The fundamental challenge lies in the presence of swirl (\(v^\theta \neq 0\)). If the velocity satisfies a Type I bound
\[
|v(x,t)| \leq \frac{C}{r},
\]
then the solution is globally regular \cite{koch2009liouville,chen2009lower1,chen2009lower2}. This condition is scaling-invariant/critical. 

A pivotal conceptual advance was observed by \cite{lei2017criticality} that, under a suitable transformation, the vortex-stretching terms, $\omega \nabla v$, in the axisymmetric Navier--Stokes equations become critical rather than supercritical, narrowing the essential gap between the a priori estimates and the known regularity criteria. 


A key feature of \eqref{eq:ASNS} is the absence of the pressure gradient in the angular equation. Since pressure gradient is non-local which is hard to deal with, this gives an insight that we can start with the angular equation first. The quantity \(\Gamma = r v_\theta\) satisfies
\be
\label{eq:Gamma_og}
\Delta \Gamma - (v_r \partial_r + v_3 \partial_{x_3}) \Gamma - \frac{2}{r}\partial_r \Gamma - \partial_t \Gamma = 0.
\ee
This implies that $\Gamma$ satisfies the maximum principle $\Lnorm{\Gamma(r,x_3,t)}{p} \leq \Lnorm{\Gamma(r,x_3,0)}{p}$. See Lemma \ref{lemma3}.  

We can take the curl of \eqref{eq:ASNS} to obtain the system of equations:
\begin{equation} \begin{dcases}
	&(\Delta - \frac{1}{r^2}) \omega_r - (v_r \partial_r + v_3 \partial_{x_3}) \omega_r + (\omega_r \partial_r + \omega_3 \partial_{x_3} )v_r  - \partial_t \omega_r = 0, \\
	&(\Delta - \frac{1}{r^2}) \omega_\theta - (v_r \partial_r + v_3 \partial_{x_3})  \omega_\theta + \frac{2}{r} v_\theta \partial_{x_3}v_\theta  + \frac{\omega_\theta v_r}{r} - \partial_t \omega_\theta = 0, \\
	&\Delta  \omega_3 - (v_r \partial_r + v_3 \partial_{x_3}) \omega_3 +(\omega_r \partial_r + \omega_3 \partial_{x_3}) v_3  - \partial_t \omega_3= 0. 
    \label{eq:omega}
\end{dcases} \end{equation}

In this paper we will consider two scalar quantities $\Omega = \omega_\theta/r$ and $V = v_\theta/\sqrt{r}$. From \eqref{eq:omega} and \eqref{eq:ASNS} we see that they satisfy the following closed system
\begin{equation} \begin{aligned}
    \label{eq:VOmega}
    &\partial_t \Omega + (v_r \partial_r + v_3 \partial_{x_3}) \Omega = \Delta \Omega + \frac{2}{r} \partial_r \Omega + \frac{\partial_{x_3} (V^2)}{r},  \\
    &\partial_t V + (v_r \partial_r + v_3 \partial_{x_3}) V + \frac{3v_r}{2r}V = \Delta V + \frac{1}{r} \partial_r V - \frac{3}{4r^2}V.
\end{aligned} \end{equation}
From the boundary condition given in \eqref{eq:ASNS}, we see that $\Omega$ and $V$ satisfiy the following boundary conditions
\begin{equation} \begin{aligned}
& \Omega = 0, \, \quad \text{on $\partial D \times (0,T),$} \\
&\partial_{x_3} V = 0, \quad \text{on $\partial^H D \times (0,T),$} \\
&\partial_r V = -\frac{3V}{2r}, \quad \text{on $\partial^V D \times (0,T).$}
\end{aligned} \end{equation}

Apart from the $V$, $\Omega$ system, one can also formulate a closed system in terms of $\Omega$ and $J = \omega_r/r$ for proving global regularity results \cite{lei2017criticality,wei2016regularity}. We follow the method shown by \cite{lei2017criticality} to formulate a closed system in terms of $\Omega$ and $V$ because of its simplicity relative to the formulation in terms of $\Omega$ and $J = \omega_r/r$.

We will also need to use the formulae of gradient of $v$ and the Hessian of $v$. We recall from \cite{zhang2022bounded} if $v$ is an axisymmetric vector then the formula for $\nabla v$ is:
\begin{equation}
\nabla v = 
\begin{pmatrix}
    \partial_r v_r & -\frac{1}{r} v_\theta & \partial_{x_3} v_r \\
\partial_r v_\theta & \frac{1}{r} v_r & \partial_{x_3} v_\theta \\
\partial_r v_3 & 0 & \partial_{x_3} v_3
\end{pmatrix}.
\end{equation}
Replacing $v$ by $\nabla \frac{v_r}{r} = \left(\partial_r \frac{v_r}{r} \right)e_r + \left(\partial_{x_3}\frac{v_r}{r} \right) e_3$, we obtain a formula for the Hessian matrix:
\begin{equation}
\label{eq:Hessian}
\nabla(\nabla \frac{v_r}{r}) = \nabla^2 \frac{v_r}{r} 
= \begin{pmatrix}
\partial_r^2 \frac{v_r}{r} & 0 & \partial_r \partial_{x_3} \frac{v_r}{r} \\
0 & \frac{1}{r} \,\partial_r \frac{v_r}{r} & 0 \\[4pt]
\partial_{x_3}\partial_r \frac{v_r}{r} & 0 & \partial_{x_3}^2 \frac{v_r}{r}
\end{pmatrix}.
\end{equation}

The present work investigates the global regularity of axisymmetric Navier--Stokes flows under the NHL boundary condition in the finite cylinder $D$. The main result is:
\begin{theorem}
    \label{thm:main}
    Consider the system \eqref{eq:ASNS}. Assume the initial value $v_0$ is smooth, divergence free and $v_0$ satisfies the NHL boundary condition. Also, assume
\begin{equation} \begin{aligned}
  \frac{9C_1C_3^{1/2}}{4} \left(\frac12 \| V_0 \|_{L^4}^4 +  \Lnorm{\Omega_0}{2}^{2} \right)^{1/4} \Lnorm{\Gamma_0}{4} \leq \frac{1}{4},
 \label{eq:smallness_condition}
\end{aligned} \end{equation}
where the constants are
\[
C_1 = (1+\sqrt{2}) 2 \sqrt{\pi} (536)^{1/4}(57452(1+5/\pi^2) + 60)^{1/4},
\]
and 
\[
C_3 = \left(  \left(2+\frac{3}{\sqrt{2}} \right)^2+\frac{5}{2} \right)^{1/2}.
\]
Then $v$ is globally regular. 
\end{theorem}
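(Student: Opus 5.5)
The plan follows Lei--Zhang's criticality argument: run an energy estimate for the functional
\[
E(t) = \frac12 \|V(t)\|_{L^4}^4 + \|\Omega(t)\|_{L^2}^2
\]
built from the system \eqref{eq:VOmega}, and close it with a continuity (bootstrap) argument using the smallness condition \eqref{eq:smallness_condition}. The first ingredient is the maximum principle for $\Gamma$ (Lemma \ref{lemma3}), which gives $\|\Gamma(t)\|_{L^4}\le \|\Gamma_0\|_{L^4}$ for all $t$.

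\textbf{Energy identity for $\Omega$.} Multiply the $\Omega$ equation by $\Omega$ and integrate over $D$; the boundary condition $\Omega=0$ on $\partial D$ removes the boundary terms. The drift term $\frac{2}{r}\partial_r\Omega$ gives $-2\pi\int\Omega^2\,dx_3$ on the axis, which has a good sign. The transport term vanishes because $v$ is divergence free and $v\cdot n=0$. For the source term, integrate by parts in $x_3$ to get
\[
-\int \frac{V^2}{r}\,\partial_{x_3}\Omega .
\]
By Cauchy--Schwarz this is absorbed into $\|\nabla\Omega\|_{L^2}^2$, leaving a multiple of $\|V^2/r\|_{L^2}^2=\int V^4/r^2$.

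\textbf{Energy identity for $V$.} Multiply the $V$ equation by $V^3$ and integrate. The diffusion together with $\frac{1}{r}\partial_r V-\frac{3}{4r^2}V$ produces the dissipation $\|\nabla V^2\|_{L^2}^2$ and a good term $\int V^4/r^2$. The boundary terms need checking:
\begin{itemize}[label={}]
\item $\partial_{x_3}V=0$ on $\partial^H D$, so the horizontal boundary contributes nothing.
\item On $\partial^V D$ the Robin condition $\partial_r V=-\tfrac{3}{2}V$ gives a term of favourable sign, which can be dropped.
\end{itemize}
This boundary bookkeeping presumably accounts for the constant $C_3$. The only bad term is
\[
-\frac32\int \frac{v_r}{r}\,V^4 ,
\]
bounded by $\|v_r/r\|_{L^\infty}\|V\|_{L^4}^4$. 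Since $V^4/r^2 = \Gamma\cdot V^2/r^{5/2}\cdots$ is awkward, the $\int V^4/r^2$ contribution from the $\Omega$ equation must be rewritten through $\Gamma$. Writing $V^2/r = \Gamma\, V/ r^{3/2}\cdot r^{-1}\cdots$, I plan instead to use $V^4/r^2 = \Gamma^2 V^2 r^{-5}$. I will absorb it into the good $\int V^4/r^2$ term coming from the $V$ equation, weighting the two equations appropriately.

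\textbf{Main obstacle: the bound on $\|v_r/r\|_{L^\infty}$.} I expect this to be the hard step. Since $-\Delta(\psi_\theta/r)-\frac{2}{r}\partial_r(\psi_\theta/r)=\Omega$ with $v_r=-\partial_{x_3}\psi_\theta$, the quantity $\nabla(v_r/r)$ is controlled by $\Omega$ through five-dimensional elliptic estimates. The Hessian formula \eqref{eq:Hessian} is used to get $\|\nabla^2 (v_r/r)\|_{L^2}\lesssim\|\nabla\Omega\|_{L^2}$ on the cylinder; here the boundary terms from the NHL conditions have to be tracked explicitly, which presumably produces the numbers $57452(1+5/\pi^2)$ and $536$. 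An Agmon-type inequality in $D$ then gives
\[
\|v_r/r\|_{L^\infty}\le C_1\|\Omega\|_{L^2}^{1/2}\|\nabla\Omega\|_{L^2}^{1/2}\cdots,
\]
interpolated so that after Hölder and Young the bad term is bounded by
\[
C\,E^{1/4}\|\Gamma\|_{L^4}\,\bigl(\|\nabla\Omega\|_{L^2}^2+\|\nabla V^2\|_{L^2}^2\bigr).
\]
The factor $\|\Gamma\|_{L^4}$ enters when one power of $V$ is traded through $V^2=\Gamma V\,r^{-3/2}\cdot r^{-1/2}$ and Hardy-type bounds.

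\textbf{Closing the argument.} The resulting inequality is
\[
E'(t)+\bigl(1-\tfrac{9C_1C_3^{1/2}}{4}E^{1/4}\|\Gamma_0\|_{L^4}\bigr)\mathcal{D}\le0,
\]
where $\mathcal{D}$ is the dissipation. By \eqref{eq:smallness_condition} the bracket is at least $3/4$ at $t=0$. By continuity $E$ stays nonincreasing, so the smallness persists for all time. This yields $\Omega\in L^\infty_tL^2_x$ and $V\in L^\infty_tL^4_x$. Elliptic recovery (Biot--Savart in $D$) then gives $v_r,v_3\in L^\infty_TL^4$, and $v_\theta=\sqrt r V\in L^\infty_TL^4$. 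These bounds place $v$ in the Ladyzhenskaya--Prodi--Serrin class, which gives global regularity.
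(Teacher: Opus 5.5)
Your architecture matches the paper's. You use the energy $\tfrac12\|V\|_{L^4}^4+\|\Omega\|_{L^2}^2$, the maximum principle for $\Gamma$, absorption of the $\Omega$-source into the weighted good term, an Agmon bound for $v_r/r$, a continuity argument, and then Ladyzhenskaya--Prodi--Serrin. Your remark that $|v_\theta|=\sqrt r\,|V|\le |V|$ on $D$ is a genuine shortcut over the paper's separate $L^4$ estimate for $v_\theta$.

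\textbf{The gap.} The one step where criticality actually enters, namely how $\|\Gamma\|_{L^4}$ appears in the vortex-stretching bound, is wrong as written. Since $\Gamma=r^{3/2}V$, your identity $V^2=\Gamma V r^{-3/2}\cdot r^{-1/2}$ is off by a factor $r^{-1/2}$. Ending with $\|\nabla V^2\|_{L^2}^2$ as the dissipation would also require $\|r^{-1}V^2\|_{L^2}\lesssim\|\nabla V^2\|_{L^2}$. That is a Hardy inequality whose weight is the distance to the axis, i.e.\ the critical two-dimensional case, and it fails even for functions vanishing on the axis (take logarithmic profiles in $r$).

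\textbf{The fix.} What you need is the pointwise identity $V^4=(r^{-1}V^2)^{3/2}\,\Gamma$ together with H\"older's inequality with exponents $(4/3,4)$:
\[
\int_D V^4\,dx\le \|r^{-1}V^2\|_{L^2}^{3/2}\,\|\Gamma\|_{L^4}.
\]
Combine this with $\|v_r/r\|_{L^\infty}\le C_1C_3^{1/2}\|\Omega\|_{L^2}^{1/2}\|\partial_{x_3}\Omega\|_{L^2}^{1/2}$ and Young's inequality. The bad term is then bounded by $\tfrac98 C_1C_3^{1/2}\|\Omega\|_{L^2}^{1/2}\|\Gamma_0\|_{L^4}\bigl(\|\partial_{x_3}\Omega\|_{L^2}^2+\|r^{-1}V^2\|_{L^2}^2\bigr)$. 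So the dissipation that closes the estimate is the weighted term produced by $-\tfrac{3}{4r^2}V$. You therefore cannot spend all of it on the $\Omega$-source as you planned. Of its coefficient $\tfrac34$, $\tfrac12$ absorbs the $\Omega$-source and only $\tfrac14$ is left for vortex stretching; this is where the $\tfrac14$ in the smallness hypothesis comes from.

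\textbf{The $L^\infty$ bound and the constants.} You rightly call the $L^\infty$ bound for $v_r/r$ the main obstacle, but you leave it as a black box, and your account of the constants is off.
\begin{itemize}
\item $C_3$ comes from the Hessian estimate $\|\nabla^2(v_r/r)\|_{L^2}\le C_3\|\partial_{x_3}\Omega\|_{L^2}$, not from boundary terms in the $V$-equation.
\item The numbers $536$ and $57452(1+5/\pi^2)+60$ come from extending $v_r/r$ to $\mathbb{R}^3$. The extension is an even reflection in $x_3$ (using $\partial_{x_3}v_r=0$ on $\partial^HD$), an odd reflection across $r=1$ (using $v_r=0$), and cutoffs.
\item They also come from Poincar\'e inequalities that remove the lower-order parts of the $H^1$ and $H^2$ norms, so that Agmon takes the homogeneous form $\|\nabla f\|_{L^2}^{1/2}\|\nabla^2 f\|_{L^2}^{1/2}$. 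In the paper, the Poincar\'e bound for $\partial_r(v_r/r)$ rests on the zero-mean identity $\int_0^1 (v_r/r)\,dx_3=0$. This is obtained by integrating $(\Delta-r^{-2})v_r=\partial_{x_3}\omega_\theta$ in $x_3$ and solving the resulting radial ODE.
\end{itemize}
Alternatively, $\|\Omega\|_{L^2}\le\pi^{-1}\|\partial_{x_3}\Omega\|_{L^2}$, which holds since $\Omega=0$ on $\partial^HD$, can absorb the lower-order terms, but this gives a different constant. Either way, your final inequality with the exact coefficient $\tfrac94 C_1C_3^{1/2}$ is not derived by anything in your plan, so the explicit threshold in the statement is not established.

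\textbf{Boundary regularity.} Ladyzhenskaya--Prodi--Serrin is an interior criterion. Regularity up to $\partial D$, in particular at $\{r=0\}\cap\partial^HD$, needs a separate argument. The paper uses CKN partial regularity to confine possible singularities to the axis, and then a boundary regularity result for NHL conditions.
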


\begin{remark} The condition \eqref{eq:smallness_condition} in Theorem \ref{thm:main} is critical (scaling invariant) under the natural scaling in the Navier--Stokes equation. If $v(x,t)$ is a solution to \eqref{eq:ASNS} then we know that $\tilde{v} = \lambda v(\lambda x, \lambda^2 t)$
 is also a solution. So $v_\theta = \tilde{v}/\lambda$, $r = \lambda \tilde{r}$. We can compute for example 
 \[
 \norm{V_0}_{L^4} = \left( \int_D \frac{v_\theta^4}{r^2} \, dx \right)^{1/4} =  \left( \int_D \frac{1}{\lambda^4} \frac{\tilde{v}_\theta^4}{\lambda^2 \tilde{r}^2} \, \lambda^3 d \tilde{x} \right)^{1/4} = \frac{1}{\lambda^{3/4}} \left( \int_D \frac{\tilde{v}_\theta^4}{
 \tilde{r}^2} \, d\tilde{x} \right)^{1/4} = \frac{1}{\lambda^{3/4}}\norm{\tilde{V}_0}_{L^4}.
\]
Similarly, 
\[
\norm{\Gamma_0}_{L^4} = \lambda^{3/4} \norm{\tilde{\Gamma}_0}_{L^4}, \quad \norm{\Omega_0}_{L^2}^{1/2} = \lambda^{-3/4} \norm{\tilde{\Omega}_0}_{L^2}^{1/2}
\]
\end{remark}

To prove Theorem \ref{thm:main}, we start from the maximum principles for \(\Gamma\). Then by carefully performing the analysis of the boundary behavior, we estimates of the $L^2$ norm of the gradient of $v_r/r$ and the Hessian of $v_r/r$ in terms of the $L^2$ norm of $\Omega$ and $\partial_{x_3}\Omega$ in Lemma \ref{lemma1}. Next, we prove the integral identities in Lemma \ref{lemma4} that enable us to use the Poincar\'e inequalities for $\partial_r (v_r/r)$. Then, in Lemma \ref{lemma:agmon_inequality}, we control the quantity \(\|v_r/r\|_{L^\infty}\) via Agmon-type inequalities. Finally we derive the \(L^4\) bounds for all of the velocity components. The proof culminates in an application of the Ladyzhenskaya--Prodi--Serrin regularity criterion, where we establish that under certain smallness conditions consistent with the critical scaling of ASNS, solutions remain smooth for all time.

The main difficulty in proving Theorem \ref{thm:main} is due to the boundary conditions. In $D$ with the NHL boundary conditions, there will be some extra terms that we need to handle when deriving the energy estimates for $V$ and $\Omega$. Also, most proofs of the existing results do not carry over to the case of $D$ and NHL boundary condition. We need to prove the existing results in a different way, for example in Lemma \ref{lemma1} and \ref{lemma:agmon_inequality}, to comply with the NHL boundary conditions in $D$.

\section{Preliminaries}
In this section, we will introduce some notation and results that will be used to prove the main theorem.

\subsection{Notation}
Here we briefly describe some notation. We will denote $b = v_r e_r + v_3 e_3.$ To simplify the notation, we will use the integral 
\begin{equation}
    \int_D f \, dx := \int_0^1 \int_0^1 f(r,x_3,t) \, r drdx_3.
\end{equation}
$\norm{f}_{L^p(E)}$ denotes the $L^p$ norm for $f$ over a set $E$, i.e. $(\int_E |f|^p \,dx)^{1/p} $. If $E$ is omitted, it means that we are integrating over the set $D$. Sometimes we use $f_0$ or $f_{\theta,0}$ to denote the initial value for $f$ or some component of $f$ if $f$ is a vector.

\subsection{Lemmas}
The following lemma gives a maximum principle for $\Gamma$. In $\mathbb{R}^3$, the proof can be found in Section 3 Proposition 1 in \cite{chae2002regularity}. We give a proof  when the domain is $D$ with the NHL boundary conditions.
\begin{lemma}
\label{lemma3}
If $\Gamma = rv_\theta$ is a smooth function in $D \times [0,T]$ then we have for any  positive even integer $p$ and $t \in [0,T]$,
\begin{equation} 
    \norm{\Gamma(t)}_{L^p} \leq \norm{\Gamma(0)}_{L^p},
\end{equation}
and
\begin{equation} 
    \norm{\Gamma(t)}_{L^\infty} \leq \norm{\Gamma(0)}_{L^\infty}.
\end{equation}
\end{lemma}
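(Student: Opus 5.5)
The plan is a direct $L^p$ energy estimate for \eqref{eq:Gamma_og}, followed by the limit $p\to\infty$. Fix an even integer $p\ge 2$. We multiply \eqref{eq:Gamma_og} by $\Gamma^{p-1}$ and integrate over $D$ with respect to $dx = r\,dr\,dx_3$; the $2\pi$ factor is irrelevant. This gives
\[
\frac1p\ddt\int_D \Gamma^p\,dx=\int_D \Gamma^{p-1}\Delta\Gamma\,dx-\int_D \Gamma^{p-1}\, b\cdot\nabla\Gamma\,dx-\int_D \frac{2}{r}\Gamma^{p-1}\partial_r\Gamma\,dx .
\]
We then handle the three terms separately.

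\textbf{Convection term.} It equals $\frac1p\int_D b\cdot\nabla(\Gamma^p)\,dx$. Integrating by parts and using $\nabla\cdot b = \frac1r\partial_r(rv_r)+\partial_{x_3}v_3=0$, it reduces to the boundary integral $\frac1p\int_{\partial D}(b\cdot n)\Gamma^p$. This vanishes because $v_r=0$ on $\partial^VD$ and $v_3=0$ on $\partial^HD$.

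\textbf{Drift term.} In the weighted measure,
\[
\int_D \frac{2}{r}\Gamma^{p-1}\partial_r\Gamma\,dx=\frac2p\int_0^1\!\!\int_0^1 \partial_r(\Gamma^p)\,dr\,dx_3=\frac2p\int_0^1 \big(\Gamma^p(1,x_3)-\Gamma^p(0,x_3)\big)\,dx_3 .
\]
Since $\Gamma=rv_\theta$ vanishes on the axis for smooth axisymmetric fields, this equals $\frac2p\int_0^1\Gamma^p(1,x_3)\,dx_3\ge0$, as $p$ is even. It enters with a minus sign, so it is a favorable term.

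\textbf{Laplacian term.} Green's formula gives
\[
\int_D\Gamma^{p-1}\Delta\Gamma\,dx=-(p-1)\int_D\Gamma^{p-2}|\nabla\Gamma|^2\,dx+\int_{\partial D}\Gamma^{p-1}\partial_n\Gamma .
\]
The boundary terms are where the NHL conditions are used, and this is the step I expect to be the main obstacle. On $\partial^HD$, $\omega_r=-\partial_{x_3}v_\theta=0$, so $\partial_{x_3}\Gamma=0$ and that contribution vanishes. On $\partial^VD$, $\omega_3=\frac{v_\theta}{r}+\partial_rv_\theta=\frac1r\partial_r(rv_\theta)=\frac1r\partial_r\Gamma=0$, so $\partial_r\Gamma=0$ at $r=1$ and that contribution also vanishes. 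At the axis $r=0$ the weight $r$ kills any contribution, by smoothness. The care needed is in checking the precise form of the weighted integration by parts, namely that $\Delta$ acting on axisymmetric functions is $\frac1r\partial_r(r\partial_r)+\partial_{x_3}^2$ with measure $r\,dr\,dx_3$. One must also check that $\omega_3=0$ really translates into a Neumann condition for $\Gamma$, rather than a condition involving $v_\theta$ alone.

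\textbf{Conclusion.} Collecting the three terms,
\[
\frac1p\ddt\|\Gamma\|_{L^p}^p=-(p-1)\int_D\Gamma^{p-2}|\nabla\Gamma|^2\,dx-\frac2p\int_0^1\Gamma^p(1,x_3)\,dx_3\le0 .
\]
Hence $\|\Gamma(t)\|_{L^p}\le\|\Gamma(0)\|_{L^p}$. For the $L^\infty$ bound, $D$ has finite measure and $\Gamma$ is smooth, so $\|\Gamma(t)\|_{L^p}\to\|\Gamma(t)\|_{L^\infty}$ as $p\to\infty$ along even integers. Passing to the limit on both sides gives the second inequality.
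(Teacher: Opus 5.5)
Your proposal is correct and follows essentially the same argument as the paper. The paper first records that $\Gamma^p$ is a subsolution, dropping $p(p-1)\Gamma^{p-2}|\nabla\Gamma|^2\ge 0$ pointwise, and then integrates; you instead multiply by $\Gamma^{p-1}$ and keep that dissipation term, but the three contributions are handled identically (convection killed by $\nabla\cdot b=0$ and $b\cdot n=0$, Laplacian boundary terms killed by $\partial_{x_3}\Gamma=0$ on $\partial^H D$ and $\partial_r\Gamma=0$ on $\partial^V D$ from $\omega_r=\omega_3=0$, and the drift term reducing to a nonpositive multiple of $\int_0^1\Gamma^p(1,x_3)\,dx_3$ since $\Gamma=0$ on the axis), followed by $p\to\infty$.
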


\begin{proof}
We first show that for any positive even integer \(p\), the function \(\Gamma^p\) is a subsolution, meaning it satisfies the differential inequality
\begin{equation}
    \label{eq:Gamma_ineq}
    \Delta \Gamma^p - b \cdot \nabla \Gamma^p - \frac{2}{r} \partial_r \Gamma^p - \partial_t \Gamma^p \ge 0 \quad \text{in } D \times [0,T].
\end{equation}

We can compute
\[
\begin{aligned}
\partial_t (\Gamma^p) &= p \Gamma^{p-1} \partial_t \Gamma, \\
\nabla (\Gamma^p) &= p \Gamma^{p-1} \nabla \Gamma, \\
\Delta (\Gamma^p) &= p \Gamma^{p-1} \Delta \Gamma + p(p-1) \Gamma^{p-2} |\nabla \Gamma|^2, \\
\frac{2}{r} \partial_r (\Gamma^p) &= \frac{2}{r} p \Gamma^{p-1} \partial_r \Gamma.
\end{aligned}
\]
Now compute the left‑hand side of \eqref{eq:Gamma_ineq}:
\[
\begin{aligned}
\Delta \Gamma^p - b \cdot \nabla \Gamma^p - \frac{2}{r} \partial_r \Gamma^p - \partial_t \Gamma^p
&= \bigl[ p \Gamma^{p-1} \Delta \Gamma + p(p-1) \Gamma^{p-2} |\nabla \Gamma|^2 \bigr] \\
&\quad - b \cdot \bigl( p \Gamma^{p-1} \nabla \Gamma \bigr) - \frac{2}{r} \bigl( p \Gamma^{p-1} \partial_r \Gamma \bigr) - p \Gamma^{p-1} \partial_t \Gamma \\
&= p \Gamma^{p-1} \bigl( \Delta \Gamma - b \cdot \nabla \Gamma - \frac{2}{r} \partial_r \Gamma - \partial_t \Gamma \bigr) \\
&\quad + p(p-1) \Gamma^{p-2} |\nabla \Gamma|^2.
\end{aligned}
\]
The term in parentheses vanishes because of \eqref{eq:Gamma_og}. Thus
\[
\Delta \Gamma^p - b \cdot \nabla \Gamma^p - \frac{2}{r} \partial_r \Gamma^p - \partial_t \Gamma^p = p(p-1) \Gamma^{p-2} |\nabla \Gamma|^2.
\]
Since \(p\) is a positive even integer, \(p(p-1) \ge 0\) and \(\Gamma^{p-2} \ge 0\). Therefore the right‑hand side is nonnegative, hence for any positive even integer $p$, $\Gamma^p$ is a subsolution:
\begin{equation} 
\Delta \Gamma^p -b \cdot \nabla \Gamma^p -\frac{2}{r}\partial_r\Gamma^p -\partial_t\Gamma^p \geq 0.
\end{equation}
This implies
\begin{equation} \begin{aligned}
    \partial_{t}\int_D \Gamma^{p}(x,t)\,dx &\leq \int_D \Delta \Gamma^{p}(x,t)\,dx - \int_D b \cdot \nabla \Gamma^{p}(x,t)\,dx - \int_D \frac{2}{r}\partial_{r}\Gamma^{p}(x,t)\,dx\\
&\equiv T_{1} + T_{2} + T_{3}. 
\end{aligned} \end{equation}
We consider bounding $T_2$, $T_1$ and $T_3$ respectively.

Using the boundary condition $v_{r} = 0$ on $\partial^V D$, $v_3 = 0$ on $\partial^H D$, also since $\operatorname{div} b = 0$, we see that
\begin{equation}
T_{2} = -\int_D \operatorname{div} (b\,\Gamma^{p}(x,t))\,dx = 0.
\end{equation}

To bound $T_1$, we first use the boundary conditions $\partial_{x_3}\Gamma = r\partial_{x_3}v_{\theta} = 0$ on $\partial^H D$. Therefore
\begin{equation}
T_{1} =\int_D \nabla \cdot ( \nabla \Gamma^p) \, dx = \int_{\partial^{V}D}\partial_{r}\Gamma^{p}\,dx_3 = \int_{\partial^{V}D}p \Gamma^{p-1}\partial_r \Gamma \,dx_3 = 0,
\end{equation}
where we have used the boundary condition on $\partial^V D$, $\partial_r \Gamma = \partial_r(rv_\theta)= r(\frac{v_\theta}{r}+\partial_r v_\theta)=0$.

For $T_3$, since $p$ is even we have
\begin{equation} \begin{aligned}
T_{3} &= -\int_{D}\frac{2}{r}\partial_{r}\Gamma^{p}\,dx \\ &= - 2\int_0^1 (\Gamma^p(1,x_3,t)-\Gamma^p(0,x_3,t)) \,dx_3  \\ & =-2\int_0^1 \Gamma^p(1,x_3,t) \, dx_3 \leq 0.
\end{aligned} \end{equation}

Combining these estimates, we get
\begin{equation}
    \partial_t \norm{\Gamma(t)}_{L^p}^p = p \norm{\Gamma(t)}_{L^p}^{p-1}  \partial_t \norm{\Gamma(t)}_{L^p} \leq 0, \quad \text{$t \in [0,T]$}
\end{equation}
This means that 
\begin{equation}
    \partial_t \norm{\Gamma(t)}_{L^p} \leq 0, \quad \text{$t \in [0,T]$}
\end{equation}
and we obtain $\norm{\Gamma(t)}_{L^p} \leq \norm{\Gamma(0)}_{L^p} $, letting $p \rightarrow\infty$ gives the desired result.
\end{proof}

Lemma \ref{lemma1} gives estimates of the $L^2$ norm of the gradient of $v_r/r$ and the Hessian of $v_r/r$ in terms of the $L^2$ norm of $\Omega$ and $\partial_{x_3}\Omega$. The periodic case can be found in \cite{hou2008global}. It was extended to the global case in \cite{lei2015axially}. The proof for the global case can also be found in \cite{wei2016regularity}. One can also prove it by the formula given in \cite{miao2013global}. It was proven in a special domain with Navier--slip boundary condition in \cite{zhang2022bounded}. Also in the exterior of a cone in \cite{li2024finite}. Due to the domain $D$ and the NHL boundary condition, Lemma \ref{lemma1} does not follow from the mentioned cases, hence we need to present a proof that can avoid the issue caused by the boundary.
\begin{lemma} If $v$ is a smooth solution to \eqref{eq:ASNS}, then
\begin{equation}
    \norm{ \nabla \frac{v_r}{r} }_{L^2} \leq || \Omega ||_{L^2},
    \label{eq:lemma1_estimate1}
\end{equation}
and 
\begin{equation}
    \norm{ \nabla^2 \frac{v_r}{r} }_{L^2} \leq  C_3 || \partial_{x_3} \Omega ||_{L^2},
    \label{eq:lemma1_estimate2}
\end{equation}
where $C_3 = \left(  \left(2+\frac{3}{\sqrt{2}} \right)^2+\frac{5}{2} \right)^{1/2}  $.
\label{lemma1}
\end{lemma}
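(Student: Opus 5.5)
We would work with the stream function. Since $\frac1r\partial_r(rv_r)+\partial_{x_3}v_3=0$, there is an axisymmetric scalar $\psi$ with $v_r=-\partial_{x_3}\psi$, $v_3=\frac1r\partial_r(r\psi)$, and $\omega_\theta=-(\Delta-\frac1{r^2})\psi$. Set $\phi=\psi/r$. A direct computation gives the five-dimensional Laplacian identity
\[
-\Bigl(\Delta+\frac{2}{r}\partial_r\Bigr)\phi=\Omega,\qquad \frac{v_r}{r}=-\partial_{x_3}\phi .
\]
The boundary conditions fix $\phi$ on $\partial D$. We have $v_3=0$ on $\partial^H D$ and $v_r=0$ on $\partial^V D$, and the normal component of $b$ equals a tangential derivative of $\psi$ on each face. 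Hence we may normalize $\psi=0$ on $\partial D$, so $\phi=0$ on $\partial D$ and $\phi$ is a Dirichlet solution of $-L\phi=\Omega$ with $L=\Delta+\frac2r\partial_r$. Note that $L$ is symmetric with respect to the weight $r^3\,dr\,dx_3$, i.e. it is the Laplacian of $\mathbb R^5$ for radial-in-$(x_1,x_2)$ functions. Consequently $\partial_{x_3}\phi$ satisfies $-L(\partial_{x_3}\phi)=\partial_{x_3}\Omega$.

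\textbf{First estimate.} We would test $-L\phi=\Omega$ against $\partial_{x_3}^2\phi$, or equivalently work with $u:=v_r/r=-\partial_{x_3}\phi$. On $\partial^H D$ we have $\partial_{x_3}u=-\partial_{x_3}^2\phi=-(L\phi-\partial_r^2\phi-\frac{3}{r}\partial_r\phi)$. Since $\phi=0$ on $x_3\in\{0,1\}$ and $\Omega=0$ there, this quantity vanishes. On $\partial^V D$ we have $u=0$. Thus $u$ has mixed Dirichlet/Neumann data and $-Lu=-\partial_{x_3}\Omega$. Testing against $u$ in $L^2(D)$ gives
\[
\|\nabla u\|_{L^2}^2=-\int_D \partial_{x_3}\Omega\,u+\text{(terms from the weight mismatch)}.
\]
The mismatch arises because the paper's measure is $r\,dr\,dx_3$ while $L$ is symmetric for $r^3$. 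The term $\int \frac2r\partial_r u\,u\,r\,dr=\int\partial_r(u^2)\,dr$ equals $u^2|_{r=1}-u^2|_{r=0}=-u(0)^2\le0$, so it has a favorable sign, and the leftover $x_3$-boundary terms vanish. Integrating by parts in $x_3$ (with $\Omega=0$ on $\partial^H D$) gives $\int\Omega\,\partial_{x_3}u\le\|\Omega\|\|\nabla u\|$, and Cauchy–Schwarz yields \eqref{eq:lemma1_estimate1}. I would double-check the sign of the $r=0$ term; if it comes out unfavorable, I would instead test with $r^2$-weighted quantities.

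\textbf{Hessian estimate.} We would use the explicit Hessian \eqref{eq:Hessian}. The $\partial_{x_3}$-derivatives are handled by applying the first step to $\partial_{x_3}u$. Tangential derivatives commute with the Dirichlet/Neumann structure: $\partial_{x_3}u$ vanishes on $\partial^H D$, and on $\partial^V D$ its derivative in $x_3$ vanishes. This gives $\|\nabla\partial_{x_3}u\|_{L^2}\le\|\partial_{x_3}\Omega\|_{L^2}$, which controls $\partial_r\partial_{x_3}u$ and $\partial_{x_3}^2u$.

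The remaining entries $\partial_r^2u$ and $\frac1r\partial_r u$ are the main obstacle. I would write
\[
\partial_r^2u+\frac3r\partial_r u=-\partial_{x_3}\Omega-\partial_{x_3}^2u
\]
and square it. The cross term $\int 6\,\partial_r^2u\,\frac{\partial_r u}{r}\,r\,dr=3\int\partial_r\bigl((\partial_r u)^2\bigr)\,dr$ produces the boundary value $3(\partial_r u)^2|_{r=1}$, which has no sign. This is exactly where the NHL boundary enters. I would control it by a one-dimensional trace inequality in $r$,
\[
f(1)^2\le 2\|f\|\|f'\|,
\]
with a $\sqrt2$-type constant, and then absorb it. This should produce the constant $C_3^2=(2+\frac3{\sqrt2})^2+\frac52$: the first term comes from the $\partial_r^2u$, $\frac1r\partial_r u$ block, and the $\frac52$ from the mixed and $x_3$ entries. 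Accounting for the constants here is where I expect to spend the most effort.
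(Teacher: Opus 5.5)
Your estimates for $\|\nabla u\|$ and $\|\nabla\partial_{x_3}u\|$, with $u=v_r/r$, are the paper's Steps 1--2. You test $Lu=\partial_{x_3}\Omega$ against $u$, and $L(\partial_{x_3}u)=\partial_{x_3}^2\Omega$ against $\partial_{x_3}u$, which is equivalent to the paper's test function $\partial_{x_3}^2u$. The axis term has the good sign, and $\partial_{x_3}u$ vanishes on $\partial D$. The stream function is not needed: $\partial_{x_3}u=0$ on $\partial^H D$ follows directly from $\omega_\theta=0$ and $v_3=0$ there.

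For the radial block you take a genuinely different route, and it works better than you think, because your claim that the cross term has no sign is wrong. You must invoke $\partial_r u(0,x_3,t)=0$ explicitly (evenness of $v_r/r$ in $r$, as the paper does); otherwise the axis endpoint contributes $-3(\partial_r u)^2(0)$. With it, squaring $\partial_r^2u+\frac3r\partial_r u=g:=\partial_{x_3}\Omega-\partial_{x_3}^2u$ gives the identity
\[
\|\partial_r^2u\|_{L^2}^2+9\Bigl\|\frac1r\partial_r u\Bigr\|_{L^2}^2+3\int_0^1(\partial_r u)^2(1,x_3,t)\,dx_3=\|g\|_{L^2}^2 .
\]
(Your sign on $\partial_{x_3}\Omega$ is off, but harmlessly.) The boundary term is a square on the good side, so you simply drop it.

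This correction matters. Had the sign really been bad, trace absorption could not work, because the boundary term is the cross term itself:
\[
3\int_0^1(\partial_ru)^2(1,x_3,t)\,dx_3=6\int\!\!\int \frac{\partial_r u}{r}\,\partial_r^2u\;r\,dr\,dx_3\le 6\Bigl\|\frac1r\partial_ru\Bigr\|\,\|\partial_r^2u\| .
\]
You would be left with only $\bigl(\|\partial_r^2u\|-3\|\frac1r\partial_ru\|\bigr)^2\le\|g\|^2$, which controls nothing.

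With the correct sign, Step 2 gives $\|g\|\le 2\|\partial_{x_3}\Omega\|$. Hence
\[
\|\nabla^2u\|^2\le 4\|\partial_{x_3}\Omega\|^2+2\|\nabla\partial_{x_3}u\|^2\le 6\|\partial_{x_3}\Omega\|^2\le C_3^2\|\partial_{x_3}\Omega\|^2 .
\]
If you expand $\|g\|^2$ using the Step 2 identity rather than the inequality, you even get $\|\nabla^2 u\|\le\|\partial_{x_3}\Omega\|$.

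The paper instead integrates $r^{-3}\partial_r(r^3\partial_r u)=g$ outward from the axis, obtaining $\frac1r\partial_r u=r^{-4}\int_0^r s^3 g\,ds$. A Hardy-type bound then gives $\|\frac1r\partial_r u\|^2\le\frac18\|g\|^2$, and $\partial_r^2 u$ is bounded by the triangle inequality in the equation. That triangle inequality is where $(2+\frac{3}{\sqrt2})^2+\frac52$ comes from, so you should not expect to reproduce $C_3$ exactly; any smaller constant proves the lemma. The paper's route never has to examine the boundary value at $r=1$. Yours must check both endpoint contributions, but it is shorter and yields a much sharper constant.
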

\begin{proof} The proof is divided into a few steps. In Step 1 we shall prove \eqref{eq:lemma1_estimate1}. In Step 2 we prove 
\begin{equation}
    \norm{ \nabla \partial_{x_3} \frac{v_r}{r} }_{L^2} \leq || \partial_{x_3} \Omega ||_{L^2}.
    \label{eq:lemma1_estimate3}
\end{equation}
Then we use \eqref{eq:lemma1_estimate3} in Step 3 to prove 
\begin{equation}
    \norm{\frac{1}{r} \partial_r\frac{v_r}{r}}_{L^2} \leq \frac{1}{\sqrt{2}} \norm{\partial_{x_3} \Omega}_{L^2}.
\end{equation}
In Step 4 we shall prove
\begin{equation}
    \norm{\partial_r^2 \frac{v_r}{r}}_{L^2} \leq \left(2+\frac{3}{\sqrt{2}} \right)  \norm{\partial_{x_3} \Omega}_{L^2}.
\end{equation}
Finally, in Step 5 we use the definition of Hessian to prove \eqref{eq:lemma1_estimate2}.
  
\textbf{Step 1} From the modified Biot-Savart Law, we know that the following equation is true:
\begin{equation}
    \label{eq:lemma1_Biot}
    \Delta \frac{v_r}{r} + \frac{2}{r}\partial_r \frac{v_r}{r} = \partial_{x_3} \Omega.
\end{equation}
Using $v_r/r$ as a test function on \eqref{eq:lemma1_Biot}, we get
\begin{equation}
    \label{eq:lemma1_1_1}
    \underbrace{\int_D \frac{v_r}{r} \Delta \frac{v_r}{r} \, dx}_{T_1} + 2 \int_D \frac{1}{r} \frac{v_r}{r} \partial_r \frac{v_r}{r} \, dx = \int_D \partial_{x_3} \Omega \frac{v_r}{r} \, dx.
\end{equation}
We can estimate $T_1$ using the boundary conditions $v_r=0$ on $\partial^V D$ and $\partial_{x_3} v_r=0$ on $\partial^H D$. We have
\begin{equation}
    \begin{aligned}
           T_1 = \int_D \frac{v_r}{r} \Delta \frac{v_r}{r} \, dx &= \int_{ D} \operatorname{div}( \frac{v_r}{r}\nabla\frac{v_r}{r} ) \, dx - \int_D \left| \nabla \frac{v_r}{r} \right|^2 \, dx \\
    & =  \int_{\partial^V D} \frac{v_r}{r} \partial_r \frac{v_r}{r} \, dx_3 + \int_{\partial^H D} \frac{v_r}{r} \partial_{x_3} \frac{v_r}{r} \,r dr - \int_D \left| \nabla \frac{v_r}{r} \right|^2 \, dx 
    \\ & = - \int_D \left| \nabla \frac{v_r}{r} \right|^2 \, dx.
    \end{aligned}
\end{equation}
Then we get from \eqref{eq:lemma1_1_1}:
\begin{equation}
\begin{aligned}
    &\int_D \left| \nabla \frac{v_r}{r} \right|^2 \, dx + \int_0^1 \left| \frac{v_r}{r}(0,x_3,t) \right|^2 \, dx_3 
    \\ &= \int_D \Omega \partial_{x_3} \frac{v_r}{r} \, dx 
    \\ &\leq \left( \int_D \Omega^2 \, dx \right)^{1/2} \left( \int_D \left|\partial_{x_3} \frac{v_r}{r} \right|^2 \, dx \right)^{1/2}.
\end{aligned}
\end{equation}
This shows that
\begin{equation}
    \label{eq:lemma1_12}
    \norm{ \nabla \frac{v_r}{r} }_{L^2} \leq \| \Omega \|_2.
\end{equation}

\textbf{Step 2} Using $\partial_{x_3}^2 (v_r/r)$ as a test function on \eqref{eq:lemma1_Biot} we have
\begin{equation}
\begin{aligned}
    \int_D \partial_{x_3} \Omega \, \partial_{x_3}^2 \frac{v_r}{r} \, dx 
    = \underbrace{\int_D \Delta \frac{v_r}{r} \, \partial_{x_3}^2 \frac{v_r}{r} \, dx}_{T_2}
    + \underbrace{\int_D \frac{2}{r} \partial_r \frac{v_r}{r} \partial_{x_3}^2 \frac{v_r}{r} \, dx}_{T_3}.
\end{aligned}
\end{equation}
We estimate $T_3$. We note that $\partial_{x_3}v_r =0$ on $\partial D$ and since $v_r =0$ on $\partial^V D$, we have $\partial_{x_3} v_r = 0$ on $\partial^V D$:
\begin{equation}
\begin{aligned}
    T_3 &= 2 \int_0^1 \int_0^1 \partial_{x_3}( \partial_r \frac{v_r}{r} \partial_{x_3} \frac{v_r}{r}) \, drdx_3 - 2 \int_0^1 \int_0^1 \partial_r \partial_{x_3} \frac{v_r}{r} \partial_{x_3} \frac{v_r}{r} \, drdx_3
    \\ & = - 2 \int_0^1 \int_0^1 \partial_r \partial_{x_3} \frac{v_r}{r} \partial_{x_3} \frac{v_r}{r} \, drdx_3 
    \\ & = - \int_0^1 \int_0^1 \partial_r ( \partial_{x_3}\frac{v_r}{r})^2 \, drdx_3
    \\ & = - \int_0^1 (\partial_{x_3} \frac{v_r}{r})^2(1,x_3,t)\, dx_3 + \int_0^1 (\partial_{x_3} \frac{v_r}{r})^2(0,x_3,t)\, dx_3
    \\ & = \int_0^1 (\partial_{x_3} \frac{v_r}{r})^2(0,x_3,t)\, dx_3.
\end{aligned}
\end{equation}

Next we estimate $T_2$:
\begin{equation}
\begin{aligned}
    T_2 &= \int_D \Delta \frac{v_r}{r} \, \partial_{x_3}^2 \frac{v_r}{r} \, dx 
    \\ &= \int_{\partial^V D} \partial_r\frac{v_r}{r} \partial_{x_3}^2 \frac{v_r}{r} \, rdx_3  + \int_{\partial^H D} \partial_{x_3}\frac{v_r}{r} \partial_{x_3}^2 \frac{v_r}{r} \, rdr - \int_D \nabla\frac{v_r}{r} \partial_{x_3}\nabla \partial_{x_3} \frac{v_r}{r} \, dx 
    \\ & = - \int_D \nabla\frac{v_r}{r} \partial_{x_3}\nabla \partial_{x_3} \frac{v_r}{r} \, dx 
    \\ &= - \int_D \partial_{x_3}( \nabla \frac{v_r}{r} \nabla\partial_{x_3}\frac{v_r}{r}) \, dx + \int_D |\nabla \partial_{x_3} \frac{v_r}{r}|^2 \, dx
    \\ & = - \int_0^1 \left. \nabla \frac{v_r}{r}\nabla \partial_{x_3}\frac{v_r}{r} \right|_{x_3 = 0}^{ x_3 = 1} rdr + \int_D |\nabla \partial_{x_3} \frac{v_r}{r}|^2 \, dx
    \\ & = - \int_0^1 \left. (\partial_r \frac{v_r}{r} e_r + 0 e_3) \cdot (0e_r + \partial_{x_3}^2 \frac{v_r}{r}e_3) \right|_{x_3 = 0}^{ x_3 = 1} rdr + \int_D |\nabla \partial_{x_3} \frac{v_r}{r}|^2 \, dx
    \\ & = \int_D |\nabla \partial_{x_3} \frac{v_r}{r}|^2 \, dx,
\end{aligned}
\end{equation}
where in the second equality the boundary terms vanish because $\partial_{x_3} \frac{v_r}{r} = 0$ on $\partial D$ and $\partial_{x_3}^2 \frac{v_r}{r} = 0$ on $\partial^V D$. In the sixth equality, we have used $\partial_{x_3} \frac{v_r}{r}=0$ on $\partial^H D$ and $\partial_r \partial_{x_3} \frac{v_r}{r} = 0$ on $\partial^H D$.

The combination of the estimation of $T_2$ and $T_3$ gives
\begin{equation}
\begin{aligned}
    &\int_D | \nabla \partial_{x_3} \frac{v_r}{r} |^2 \, dx + \int_0^1 (\partial_{x_3} \frac{v_r}{r})^2(0,x_3,t)\, dx_3 
    \\ &= \int_D \partial_{x_3}\Omega \, \partial_{x_3}^2 \frac{v_r}{r} \, dx 
    \\ &\leq  \left( \int_D \left|\partial_{x_3} \Omega \right|^2 \, dx \right)^{1/2} \left( \int_D \left|\nabla \partial_{x_3} \frac{v_r}{r} \right|^2 \, dx\right)^{1/2}.
\end{aligned}
\end{equation}
This shows that
\begin{equation}
    \label{eq:lemma1_11}
    \norm{ \nabla \partial_{x_3} \frac{v_r}{r} }_{L^2} \leq || \partial_{x_3} \Omega ||_{L^2}.
\end{equation}

\textbf{Step 3} We want to show that 
\begin{equation}
    \label{eq:lemma1_7}
    \norm{\frac{1}{r} \partial_r\frac{v_r}{r}}_{L^2}^2 \leq \frac{1}{2} \norm{\partial_{x_3} \Omega}_{L^2}^2.
\end{equation}
Again using the equation
\begin{equation}
    \Delta \frac{v_r}{r} + \frac{2}{r}\partial_r \frac{v_r}{r} = \partial_{x_3} \Omega,
\end{equation}
we have 
\begin{equation}
    \label{eq:lemma1_5}
    \partial_r^2\frac{v_r}{r} + \frac{3}{r}\partial_r \frac{v_r}{r} + \partial_{x_3}^2 \frac{v_r}{r}= \partial_{x_3} \Omega,
\end{equation}
or equivalently
\begin{equation}
    \label{eq:lemma1_8}
    \frac{1}{r^3} \partial_r \left( r^3 \partial_r \frac{v_r}{r} \right) + \partial_{x_3}^2 \frac{v_r}{r} = \partial_{x_3}\Omega.
\end{equation}

Recall the regularity result for any smooth axisymmetric vector on the axis (\cite{liu2009characterization}, Corollary 1), all the even-order derivatives with respect to $r$ of $v_r$ at $r=0$ vanish. Hence, the expansion of $v_r$ around $r=0$ is 
\begin{equation}
    c_1r + c_2 r^3 + c_3 r^5 + \cdots,
\end{equation}
where $c_i=c_i(x_3,t)$. Then the expansion of $v_r/r$ around $r=0$ is 
\begin{equation}
    c_1 + c_2 r^2 + c_3 r^4 + \cdots,
\end{equation}
and thus
\begin{equation}
    \partial_r\frac{v_r}{r} = 0,\quad{\text{at $r=0$.}}
\end{equation}

Rearranging \eqref{eq:lemma1_8} and integrating from $0$ to $r$, using $\partial_r (v_r/r)(0,x_3,t) = 0$, we have
\begin{equation}
    \left( \frac{1}{r} \partial_r \frac{v_r}{r} \right) (r,x_3,t)  = \frac{1}{r^4} \int_0^r s^3 \underbrace{ \left(\partial_{x_3}\Omega(s,x_3,t) - \partial_{x_3}^2 \frac{v_r(s,x_3,t)}{s} \right)  }_{=:g(s)} \, ds,
\end{equation}
where we have denoted $$g(s,x_3,t) := \partial_{x_3}\Omega(s,x_3,t) - \partial_{x_3}^2 \frac{v_r(s,x_3,t)}{s}. $$ We have
\begin{equation}
    \frac{1}{r} \partial_r \frac{v_r}{r} = 
    \frac{1}{r^4} \int_0^r s^3 g(s,x_3,t)\,ds.
\end{equation}
Using H\"{o}lder's inequality we have
\begin{equation}
\begin{aligned}
    \left| \frac{1}{r} \partial_r \frac{v_r}{r} \right|^2=\left| \frac{1}{r^4} \int_0^r s^3 g(s,x_3) \, ds \right|^2
     & =\frac{1}{r^8}  \left| \int_0^r s^3 g(s,x_3) \, ds \right|^2
     \\ & \leq \frac{1}{r^8} \left( \int_0^r s^3 \, ds \right) \left( \int_0^r s^3 g^2(s,x_3) \,ds \right) 
      \\ &  = \frac{1}{4r^4} \int_0^r s^3 g^2(s,x_3) \, ds.
\end{aligned}
\end{equation}
Using this and integrating $|\frac{1}{r}\partial_r \frac{v_r}{r}|^2r$ with respect to $r$ gives
\begin{equation}
    \int_0^1 \left|\frac{1}{r}\partial_r \frac{v_r}{r} \right|^2 \,rdr \leq \frac{1}{4}  \int_0^1 \frac{1}{r^4} \int_0^r s^3 g^2(s,x_3) \, ds \, rdr = \frac{1}{4}  \int_0^1 \int_0^r s^3 g^2(s,x_3) \frac{1}{r^3}  \, ds dr.
\end{equation}
We switch the order of integration for the last integral which yields
\begin{equation}
\begin{aligned}
    \frac{1}{4}  \int_0^1 \int_0^r s^3 g^2(s,x_3,t) \frac{1}{r^3}  \, ds dr 
    & = \frac{1}{4} \int_0^1 \int_s^1 s^3 g^2(s,x_3,t) \frac{1}{r^3}  \, dr ds 
    \\ & = \frac{1}{4} \int_0^1  s^3 g^2(s,x_3,t) \int_s^1 \frac{1}{r^3}  \, dr ds 
    \\ & = \frac{1}{4} \int_0^1  s^3g^2(s,x_3,t) \left(\frac{1}{2s^2} - \frac{1}{2} \right) \, ds
    \\& = \frac{1}{8} \int_0^1 (s-s^3) g^2(s,x_3,t)\,ds 
    \\ & \leq \frac{1}{8} \int_0^1 s g^2(s,x_3,t)\,ds .
\end{aligned}
\end{equation}
Replacing $s$ with $r$, we get
\begin{equation}
\begin{aligned}
    \int_0^1 \left| \frac{1}{r} \partial_r \frac{v_r}{r} \right|^2 \, rdr \leq \frac{1}{8} \int_0^1  g^2(r,x_3,t) \, rdr
\end{aligned}
\end{equation}
Integrating in $x_3$ we get
\begin{equation}
    \norm{\frac{1}{r}\partial_r \frac{v_r}{r}}_{L^2}^2 \leq \frac{1}{8} \norm{g}_{L^2}^2.
\end{equation}
Recalling that $g = \partial_{x_3}\Omega(s,x_3,t) - \partial_{x_3}^2 \frac{v_r(s,x_3,t)}{s}$ and using \eqref{eq:lemma1_11}, we obtain
\begin{equation}
\begin{aligned}
    \norm{\frac{1}{r}\partial_r \frac{v_r}{r}}_{L^2} ^2
    & \leq \frac{1}{8} \norm{g}^2_{L^2}
    \\ & \le \frac{1}{4} \left( \norm{\partial_{x_3} \Omega}^2_{L^2} +\norm{\partial_{x_3}^2 \frac{v_r}{r}}^2_{L^2} \right)
    \\ & \leq \frac{1}{2} \norm{\partial_{x_3} \Omega}^2_{L^2}.
\end{aligned}
\end{equation}

\textbf{Step 4} We want to show that 
\begin{equation}
    \label{eq:lemma1_6}
    \norm{\partial_r^2 \frac{v_r}{r}}_{L^2} \leq \left(2+\frac{3}{\sqrt{2}} \right)  \norm{\partial_{x_3} \Omega}_{L^2}.
\end{equation}
From \eqref{eq:lemma1_5}, we have
\begin{equation}
        \partial_r^2\frac{v_r}{r} = \partial_{x_3} \Omega - \frac{3}{r}\partial_r \frac{v_r}{r} - \partial_{x_3}^2 \frac{v_r}{r}.
\end{equation}
Then
\begin{equation}
\begin{aligned}
    \norm{\partial_r^2\frac{v_r}{r}}_{L^2} 
    & = \norm{\partial_{x_3} \Omega - \frac{3}{r}\partial_r \frac{v_r}{r} - \partial_{x_3}^2 \frac{v_r}{r}}_{L^2} 
    \\ & \leq \norm{\partial_{x_3} \Omega}_{L^2} + 3\norm{\frac{1}{r}\partial_r \frac{v_r}{r} }_{L^2} + \norm{\partial_{x_3}^2 \frac{v_r}{r}}_{L^2} 
    \\ & \leq \norm{\partial_{x_3} \Omega}_{L^2} + \frac{3}{\sqrt{2}} \norm{\partial_{x_3} \Omega}_{L^2} + \norm{\partial_{x_3} \Omega}_{L^2}
    \\ & = \left(2+\frac{3}{\sqrt{2}} \right) \norm{\partial_{x_3} \Omega}_{L^2}.
\end{aligned}
\end{equation}

\textbf{Step 5} From \eqref{eq:Hessian}, we see that
\begin{equation}
    \label{eq:lemma1_4}
    \norm{\nabla^2 \frac{v_r}{r}}_{L^2}^2 = \norm{\partial_r^2 \frac{v_r}{r}}_{L^2}^2 + \norm{ \frac{1}{r} \partial_r \frac{v_r}{r}}_{L^2}^2 + \norm{\partial_{x_3}^2 \frac{v_r}{r}}_{L^2}^2 + 2 \norm{\partial_r \partial_{x_3} \frac{v_r}{r}}_{L^2}^2.
\end{equation}
Using \eqref{eq:lemma1_estimate1}, \eqref{eq:lemma1_11}, \eqref{eq:lemma1_6} and \eqref{eq:lemma1_7} we have
\begin{equation}
\begin{aligned}
    \norm{\nabla^2 \frac{v_r}{r}}_{L^2}^2 
    & = \norm{\partial_r^2 \frac{v_r}{r}}_{L^2}^2 + \norm{ \frac{1}{r} \partial_r \frac{v_r}{r}}_{L^2}^2 + \norm{\partial_{x_3}^2 \frac{v_r}{r}}_{L^2}^2 + 2 \norm{\partial_r \partial_{x_3} \frac{v_r}{r}}_{L^2}^2 
    \\ & \leq \left(2+\frac{3}{\sqrt{2}} \right)^2 \norm{\partial_{x_3} \Omega}_{L^2}^2 + \frac{1}{2} \norm{\partial_{x_3} \Omega}_{L^2}^2  + 2 \norm{\partial_{x_3} \Omega}_{L^2}^2 
    \\& \leq \left( \left(2+\frac{3}{\sqrt{2}} \right)^2+\frac{5}{2} \right) \norm{\partial_{x_3} \Omega}_{L^2}^2.
\end{aligned}
\end{equation}
Hence \eqref{eq:lemma1_estimate2} is true.
\end{proof}

We observe that $v_r/r$ is the rate of change per unit area in the radial direction. Lemma \ref{lemma4} describes that the rate of change $v_r/r$ in the radial direction is in some sense balanced in the whole domain. Imagine a elastic ring in $D$, affected by the fluid, that is expanding fast in some region, then the ring must be expanding slowly, or even contracting in some other region. This proof is a slight modification from the same result in \cite{zhang2022bounded}.
\begin{lemma} If $v$ is a smooth solution to \eqref{eq:ASNS}, then for any $r,t$
    \begin{equation}
    \int_0^1 \left( \frac{v_r}{r} \right)(r,x_3,t) \, dx_3 = 0.
    \end{equation}
\label{lemma4}
\end{lemma}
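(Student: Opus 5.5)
The plan is to use the incompressibility condition together with the vanishing of $v_3$ on the horizontal boundary. For fixed $t$, define the function
\[
F(r) := \int_0^1 v_r(r,x_3,t)\,dx_3 .
\]
It suffices to show $F \equiv 0$ on $[0,1]$. Indeed, for $r>0$ the factor $1/r$ does not depend on $x_3$, so $\int_0^1 (v_r/r)\,dx_3 = F(r)/r$. The case $r=0$ will then follow by continuity, since $v_r/r$ is smooth up to the axis by the expansion $v_r = c_1 r + c_2 r^3 + \cdots$ recalled in Step 3 of the proof of Lemma \ref{lemma1}.

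\textbf{Step 1: an ODE for $F$.} Multiply the divergence-free condition $\frac{1}{r}\partial_r(r v_r) + \partial_{x_3} v_3 = 0$ by $r$ and integrate in $x_3$ over $[0,1]$. This gives
\[
\partial_r\bigl(r F(r)\bigr) = -r\bigl(v_3(r,1,t) - v_3(r,0,t)\bigr).
\]
By the NHL boundary condition, $v_3=0$ on $\partial^H D$, so the right-hand side vanishes. Therefore $rF(r)$ is constant in $r$.

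\textbf{Step 2: fix the constant.} Evaluate the constant at $r=0$. Since $v_r$ is smooth (bounded) near the axis, $rF(r)\to 0$ as $r\to 0$. Hence $rF(r)\equiv 0$, and so $F(r)=0$ for all $r\in(0,1]$. Equivalently, we could evaluate at $r=1$ using $v_r=0$ on $\partial^V D$; either boundary suffices, which also confirms consistency. Dividing by $r$ and passing to the limit $r\to0$ gives the claim for all $r$.

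The argument is short. The only point needing care is justifying the interchange of $\partial_r$ with the $x_3$-integral and the behaviour at $r=0$. Both are handled by the assumed smoothness of the solution and the axis regularity result of \cite{liu2009characterization} already used in Lemma \ref{lemma1}. I expect no real obstacle beyond making sure the horizontal boundary condition $v_3=0$ is the one invoked, rather than any condition on the vorticity.
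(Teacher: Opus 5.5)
Your proof is correct, and it takes a genuinely different and more elementary route than the paper.

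\textbf{The paper's route.} The paper starts from the second-order relation $(\Delta - \frac{1}{r^2})v_r = \partial_{x_3}\omega_\theta$ and rewrites it for $h = v_r/r$. It then integrates in $x_3$, using $\omega_\theta = 0$ and $\partial_{x_3} v_r = 0$ on $\partial^H D$. This gives the ODE $g'' + \frac{3}{r}g' = 0$ for $g(r) = \int_0^1 h\,dx_3$. Its solution space $c_1 + c_2 r^{-2}$ is two-dimensional, so the paper needs two conditions: axis regularity to kill $c_2$, and $v_r = 0$ on $\partial^V D$ to kill $c_1$.

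\textbf{Your route.} You integrate the divergence-free condition directly, which is really a flux statement. The identity $(rF)' = 0$ gives $g = F/r = c\,r^{-2}$ at once. So the constant mode $c_1$ never appears, and a single condition fixes $c=0$: either boundedness of $v_r$ at the axis or $v_r(1,\cdot)=0$. Your first-order identity in fact implies the paper's ODE and shows that only the $r^{-2}$ mode is compatible with incompressibility.

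\textbf{What each buys.} Your argument uses only incompressibility and the impermeability condition $v_3 = 0$ on the top and bottom. It therefore holds for any smooth axisymmetric divergence-free field with that property, without the vorticity part of the NHL condition or the Biot--Savart relation. It also avoids the paper's informal step of looking for solutions of the form $c(t)r^m$. The paper's route has the advantage of staying inside the elliptic framework already set up in the proof of Lemma~\ref{lemma1}. Note that its boundary fact $\partial_{x_3} v_r = 0$ on $\partial^H D$ is itself derived from $v_3 = 0$ there, so both proofs ultimately rest on the same horizontal condition. For this lemma, though, yours is the cleaner proof.
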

\begin{proof}
We start the proof from the modified Biot-Savart Law. We know that 
\begin{equation}
    (\Delta - \frac{1}{r^2}) v_r = \partial_{x_3} \omega_\theta
\end{equation}
Let $h = \frac{v_r}{r}$, then we have 
\begin{equation}
    \label{eq:lemma_agmon_2}
    \partial_r^2 h + \frac{3}{r}\partial_r h + \partial_{x_3}^2 h  = \frac{1}{r}\partial_{x_3} \omega_\theta.
\end{equation}
By the boundary conditions $\omega_\theta=0$ and $\partial_{x_3}v_r = \partial_r v_3 =0 $ on $\partial^H D$.
\begin{equation}
    \int_0^1 \partial_{x_3} \omega_\theta \, dx_3 = \omega_\theta(r,1,t) - \omega_\theta(r,0,t) = 0,
\end{equation}
and
\begin{equation}
    \int_0^1 \partial_{x_3}^2 h\, dx_3 = \frac{1}{r}\int_0^1 \partial_{x_3}^2 v_r \, dx_3 = \frac{1}{r} \left( \partial_{x_3} v_r(r,1,t) - \partial_{x_3} v_r(r,0,t) \right) = 0.
\end{equation}
Integrating \eqref{eq:lemma_agmon_2} gives
\begin{equation}
    \partial_r^2  \int_0^1 h \, dx_3 + \frac{3}{r}\partial_r \int_0^1 h \, dx_3 = 0.
\end{equation}
Let $g(r,t) = \int_0^1 h \, dx_3$. We have an ODE
\begin{equation}
    \partial_r^2 g(r,t) + \frac{3}{r} \partial_r g(r,t) = 0.
\end{equation}
We look for solution in the form of $g(r,t) = c(t)r^m$ for some constant $m$. If we assume $g$ in this form, we found that the general solution to the ODE is 
\begin{equation}
    g(r,t) = c_1(t) + c_2(t) r^{-2}.
\end{equation}
We note that $c_2(t)=0$ for all $t$ since otherwise the solution is not smooth at $r=0$. From the boundary condition at $r=1$, $v_r = 0$ on $\partial^V D$, hence for any $t$,
\begin{equation}
    g(1,t) = \int_0^1 v_r(1,x_3,t) \, dx_3 = c_1(t) = 0.
\end{equation}
Thus we have for any $r,t$,
\begin{equation}
     \int_0^1 \frac{v_r}{r} \, dx_3 = g(r,t) = 0.
\end{equation}
\end{proof}

Since for any $r,t$,
\begin{equation}
    \int_0^1 \frac{v_r}{r} dx_3 = 0
\end{equation}
from this we infer
\begin{equation}
     \int_0^1 \partial_r \frac{v_r}{r} dx_3 = \partial_r \int_0^1 \frac{v_r}{r} dx_3 = 0.
\end{equation}
and thus
\begin{equation}
     \int_0^1 r \int_0^1 \partial_r  \frac{v_r}{r} \, dx_3dr = 0.
\end{equation}
This prove that $\int_D \partial_r \frac{v_r}{r} \, dx =0.$ So we can apply the Poincar\'e inequality for $\partial_r \frac{v_r}{r}$ in $D$.

Next, we present a proof of the Agmon's inequality in $\mathbb{R}^3$ \cite{agmon2010lectures}. Although the proof of this inequality can be found in the literature, we want to compute an explicit constant for the inequality.
\begin{lemma}
    \label{lemma:agmonR^3}
    If $f \in H^2(\mathbb{R}^3)$, then 
    \begin{equation}
        \norm{f}_{L^\infty(\mathbb{R}^3)} \leq (1+\sqrt{2}) 2 \sqrt{\pi}\norm{f}^{1/2}_{H^1(\mathbb{R}^3)} \norm{f}^{1/2}_{H^2(\mathbb{R}^3)}
    \end{equation}
\end{lemma}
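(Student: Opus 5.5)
We argue through the Fourier transform. By density we may take $f\in\mathcal S(\mathbb R^3)$, and we use the convention $\hat f(\xi)=\int f(x)e^{-ix\cdot\xi}\,dx$. Define the norms by $\|f\|_{H^k}^2=(2\pi)^{-3}\int(1+|\xi|^2)^k|\hat f|^2\,d\xi$, or equivalently by $\|f\|_{H^1}^2=\|f\|_{L^2}^2+\|\nabla f\|_{L^2}^2$ and $\|f\|_{H^2}^2\ge\|f\|_{L^2}^2+\|\Delta f\|_{L^2}^2$ in the sense used later. Fourier inversion gives
\[
\|f\|_{L^\infty}\le (2\pi)^{-3}\int_{\mathbb R^3}|\hat f(\xi)|\,d\xi .
\]
For a radius $R>0$ to be chosen, split the integral into $|\xi|\le R$ and $|\xi|>R$.

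On the low-frequency ball, Cauchy--Schwarz with the weight $(1+|\xi|^2)^{1/2}$ gives
\[
\int_{|\xi|\le R}|\hat f|\le\Big(\int_{|\xi|\le R}\frac{d\xi}{1+|\xi|^2}\Big)^{1/2}\Big(\int(1+|\xi|^2)|\hat f|^2\Big)^{1/2}.
\]
The first factor is at most $(4\pi R)^{1/2}$, because $\int_0^R\frac{4\pi\rho^2}{1+\rho^2}\,d\rho\le4\pi R$.

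On the high-frequency region, use the weight $(1+|\xi|^2)$:
\[
\int_{|\xi|>R}|\hat f|\le\Big(\int_{|\xi|>R}\frac{d\xi}{(1+|\xi|^2)^2}\Big)^{1/2}\Big(\int(1+|\xi|^2)^2|\hat f|^2\Big)^{1/2}.
\]
Here the first factor is at most $(4\pi/R)^{1/2}$, since $\int_R^\infty 4\pi\rho^{-2}\,d\rho=4\pi/R$. Inserting the $(2\pi)^{3/2}$ normalizations from Plancherel, we obtain
\[
\|f\|_{L^\infty}\le (2\pi)^{-3/2}\,2\sqrt{\pi}\,\Big(R^{1/2}\|f\|_{H^1}+R^{-1/2}\|f\|_{H^2}\Big).
\]

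Now choose $R=\|f\|_{H^2}/\|f\|_{H^1}$. This is allowed because $\|f\|_{H^2}\ge\|f\|_{H^1}$, and the case $f=0$ is trivial. With this choice the bracket becomes $2\|f\|_{H^1}^{1/2}\|f\|_{H^2}^{1/2}$. This already gives a constant no larger than $(1+\sqrt2)2\sqrt\pi$, since $(2\pi)^{-3/2}\cdot 2\le 1+\sqrt2$. Because the lemma only claims an upper bound, any smaller constant is acceptable.

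The stated factor $(1+\sqrt2)$ suggests the author instead used two separate pieces: a bound $1$ from one part, and a $\sqrt2$ coming from $(1+|\xi|^2)^2\le2(1+|\xi|^4)$ when $\|f\|_{H^2}$ is defined through $\|f\|_{L^2}+\|\nabla^2 f\|_{L^2}$. I would follow whichever norm convention the paper adopts later. Under that convention I would check the inequality $(1+|\xi|^2)^2\le 2(1+|\xi|^4)$, which gives the $\sqrt2$.

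The main obstacle is bookkeeping. One must fix the $2\pi$ normalization and the exact definition of the $H^2$ norm consistently with how the constant $C_1$ is later assembled. The analytic content is the routine frequency splitting above. Finally, density of $\mathcal S$ in $H^2$, together with the continuous embedding $H^2\subset C_0$, extends the inequality to all $f\in H^2(\mathbb R^3)$.
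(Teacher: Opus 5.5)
Your proof is correct and follows the paper's route: Fourier inversion, a split at radius $\|f\|_{H^2}/\|f\|_{H^1}$, Cauchy--Schwarz on each piece, and the same integrals $4\pi R$ and $4\pi/R$. The paper's constant is just cruder. It weights the low-frequency piece by $(1+|\xi|)$ and uses $(1+|\xi|)^2\le 2(1+|\xi|^2)$, which is the real source of the $\sqrt{2}$ (not the $H^2$ piece, as you guessed), and it does not exploit the $2\pi$ factors of its $e^{2\pi i x\cdot\xi}$ convention. So your smaller constant implies the stated one, with ample room to absorb the factor $(4/3)^{1/4}$ lost in passing from your Fourier-side $H^2$ norm to $\bigl(\|f\|_{L^2}^2+\|\nabla f\|_{L^2}^2+\|\nabla^2 f\|_{L^2}^2\bigr)^{1/2}$.
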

\begin{proof}
    Set $M = \norm{f}_{H^1(\mathbb{R}^3)}^{-1} \norm{f}_{H^2(\mathbb{R}^3)}$. Let $\hat{f}(\xi)$ be the Fourier transform of $f$. Using Cauchy-Schwarz inequality on the Fourier expansion of $f$ gives
    \begin{equation}
    \begin{aligned}
    \label{eq:lemma_agmonR^3_1}
                f(x) &= \int \hat{f}(\xi) e^{2\pi i\xi \cdot x} \, d\xi 
                \\ & = \int_{|\xi| \leq M} \hat{f}(\xi) e^{2\pi i\xi \cdot x} \, d\xi + \int_{|\xi| \geq M} \hat{f}(\xi) e^{2\pi i\xi \cdot x} \, d\xi 
                \\ & = \int_{|\xi| \leq M} \hat{f}(\xi) e^{2\pi i\xi \cdot x} (1+|\xi|)(1+|\xi|)^{-1} \, d\xi + \int_{|\xi| \geq M} \hat{f}(\xi) e^{2\pi i\xi \cdot x} |\xi|^2 |\xi|^{-2} \, d\xi 
                \\& \leq \left( \int_{|\xi| \leq M} |\hat{f}|^2(\xi) (1+|\xi|)^2\, d\xi \right)^{1/2} \left( \int_{|\xi| \leq M} (1+|\xi|)^{-2} \, d\xi \right)^{1/2} 
                \\ &+ \left( \int_{|\xi| \geq M} |\hat{f}|^2(\xi) |\xi|^4\, d\xi \right)^{1/2} \left( \int_{|\xi| \geq M}  |\xi|^{-4} \, d\xi \right)^{1/2}
                \\& \leq \sqrt{2}\norm{f}_{H^1(\mathbb{R}^3)} \left( \int_{|\xi| \leq M} (1+|\xi|)^{-2} \, d\xi \right)^{1/2} + \norm{f}_{H^2(\mathbb{R}^3)} \left( \int_{|\xi| \geq M}  |\xi|^{-4} \, d\xi \right)^{1/2}.
    \end{aligned}
    \end{equation}

Next we can compute in spherical coordinates $(\rho,\phi_1,\phi_2)$ that
\begin{equation}
\begin{aligned}
    &\int_{|\xi| \leq M} (1+|\xi|)^{-2} \, d\xi  
    \\ &= \int_0^M \int_0^\pi \sin\phi_2 \, d\phi_2 \int_0^{2\pi } \, d\phi_1 \frac{\rho^2}{(1+\rho)^2} \, d\rho
    \\ &= 4\pi \int_0^M \frac{\rho^2}{(1+\rho)^2} \, d\rho  
    \\ & \leq 4\pi \int_0^M \frac{\rho^2}{1+\rho^2} \, d\rho 
    \\ &= 4 \pi( M - \tan^{-1}(M)) \leq 4\pi M, 
\end{aligned}
\end{equation}
and
\begin{equation}
\begin{aligned}
    \int_{|\xi| \geq M} |\xi|^{-4} \, d\xi = 4\pi \int_M^\infty \rho^{-2}\, d\rho = 4\pi M^{-1}.
\end{aligned}
\end{equation}
Putting this into \eqref{eq:lemma_agmonR^3_1} we obtain
\begin{equation}
\begin{aligned}
    |f(x)| \leq  2\sqrt{2\pi} M^{1/2} \norm{f}_{H^1(\mathbb{R}^3)}  + 2\sqrt{\pi} M^{-1/2} \norm{f}_{H^2(\mathbb{R}^3)}.
\end{aligned}
\end{equation}
Since $M = \norm{f}_{H^1(\mathbb{R}^3)}^{-1} \norm{f}_{H^2(\mathbb{R}^3)}$, we have
\begin{equation} \begin{aligned}
    |f(x)| &\leq  2\sqrt{2\pi} \norm{f}_{H^1(\mathbb{R}^3)}^{1/2} \norm{f}_{H^2(\mathbb{R}^3)}^{1/2} +2\sqrt{\pi} \norm{f}_{H^1(\mathbb{R}^3)}^{1/2} \norm{f}_{H^2(\mathbb{R}^3)}^{1/2} 
    \\ &= (1+\sqrt{2}) 2 \sqrt{\pi} \norm{f}_{H^1(\mathbb{R}^3)}^{1/2} \norm{f}_{H^2(\mathbb{R}^3)}^{1/2}.
\end{aligned} \end{equation}
This implies
\begin{equation}
    \norm{f}_{L^\infty(\mathbb{R}^3)} \leq (1+\sqrt{2}) 2 \sqrt{\pi} \norm{f}_{H^1(\mathbb{R}^3)}^{1/2} \norm{f}_{H^2(\mathbb{R}^3)}^{1/2}.
\end{equation}
\end{proof}

Define the cutoff function in the $x_3$-direction by
\be
\eta_1(x_3) =
\begin{dcases}
    0, \quad x_3 \leq -1/2, \\
    -16x_3^3-12x_3^2+1, \quad -1/2 \leq x_3  \leq 0, \\
    1, \quad 0 \leq x_3 \leq 1, \\
    16x_3^3 -60x_3^2 + 72x_3 -27, \quad 1 \leq x_3 \leq 3/2, \\
    0, \quad x_3 \geq 3/2.
\end{dcases}
\ee
We have $\eta_1 \leq 1$ and we can check by direct computation that 
\be
|\partial_{x_3}\eta_1(x_3)| \leq 3, \quad |\partial^2_{x_3}\eta_1(x_3)| \leq 24.
\ee
Also, define the cutoff function in the $r$-direction by
\be
\eta_2(r) =
\begin{dcases}
    1, \quad 0 \leq r \leq 1, \\
    16r^3 -60r^2 + 72r -27, \quad 1 \leq r \leq 3/2, \\
    0, \quad r \geq 3/2.
\end{dcases}
\ee
$\eta_2 \leq 1$ and it satisfies the same bound as $\eta_1$. Namely $|\partial_r\eta_2| \leq 3$ and $|\partial^2_r\eta_2| \leq 24$. We will use $\eta_1(x_3)$ and $\eta_2(r)$ in the proof of Lemma \ref{lemma:agmon_inequality}.

The $\mathbb{R}^3$ case of Lemma \ref{lemma:agmon_inequality} can be found in  \cite{lei2017criticality}. Due to the domain $D$ and the boundary conditions, we need to provide another proof.
\begin{lemma} If $v$ is a smooth solution to \eqref{eq:ASNS} then
\label{lemma:agmon_inequality}
\begin{equation}
    \Lnorm{\frac{v_r}{r}}{\infty(D)} \leq C_1 \Lnorm{\nabla \frac{v_r}{r}}{2(D)}^{1/2} \Lnorm{\nabla^2 \frac{v_r}{r}}{2(D)}^{1/2}
\end{equation}
where $C_1 = (1+\sqrt{2}) 2 \sqrt{\pi} (536)^{1/4}(57452(1+5/\pi^2) + 60)^{1/4} $.
\end{lemma}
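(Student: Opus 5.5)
Let $h = v_r/r$, viewed as an axisymmetric scalar function on the Cartesian cylinder $D$. The plan is to extend $h$ to a compactly supported $H^2(\mathbb{R}^3)$ function $\tilde h$, apply Lemma \ref{lemma:agmonR^3} to $\tilde h$, and then bound $\|\tilde h\|_{H^1(\mathbb{R}^3)}$ and $\|\tilde h\|_{H^2(\mathbb{R}^3)}$ by $\|\nabla h\|_{L^2(D)}$ and $\|\nabla^2 h\|_{L^2(D)}$ respectively. Since $\|h\|_{L^\infty(D)}\le\|\tilde h\|_{L^\infty(\mathbb{R}^3)}$, this gives the claim with $C_1 = (1+\sqrt2)2\sqrt\pi\,(A B)^{1/4}$. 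Here $A=536$ and $B=57452(1+5/\pi^2)+60$ are the constants in $\|\tilde h\|_{H^1}^2\le A\|\nabla h\|_{L^2}^2$ and $\|\tilde h\|_{H^2}^2\le B\|\nabla^2 h\|_{L^2}^2$.

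\textbf{Extension.} The boundary conditions dictate reflections. On $\partial^H D$ we have $\partial_{x_3}v_r=0$, so $\partial_{x_3}h=0$ there, and the even reflection in $x_3$ across $x_3=0$ and $x_3=1$ is $C^1$ with $H^2$ matching. On $\partial^V D$ we have $h=0$, so the odd reflection $h(2-r,x_3)\mapsto -h$ across $r=1$ is $H^2$: the first derivative in $r$ matches, and the second derivative $\partial_r^2 h$ must also vanish at $r=1$. That last fact I would check from \eqref{eq:lemma_agmon_2}: at $r=1$, $\partial_{x_3}^2h=0$ and $\omega_\theta=0$, hence $\partial_{x_3}\omega_\theta=0$, so $\partial_r^2 h=-3\partial_r h$. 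This need not vanish. So instead I would use the plain odd reflection for $H^2$ purposes, which only requires $h$ and the tangential second derivatives to match; the odd reflection of an $H^2$ function vanishing on the boundary is in $H^2$. I would then set
\[
\tilde h(r,x_3)=\eta_1(x_3)\,\eta_2(r)\,(\text{reflected }h)(r,x_3),
\]
which is supported in $r\le 3/2$, $-1/2\le x_3\le 3/2$. The reflected regions have at most a bounded number of copies of $D$, with the Jacobian factor $r$ versus $2-r$ comparable up to a constant (at most $3$ on $1\le r\le 3/2$).

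\textbf{Norm bounds.} Expand $\nabla\tilde h$ and $\nabla^2\tilde h$ by the product rule, using $|\eta_i'|\le3$ and $|\eta_i''|\le24$. This bounds $\|\tilde h\|_{H^1}^2$ by a numerical multiple of $\|h\|_{L^2(D)}^2+\|\nabla h\|_{L^2(D)}^2$, and $\|\tilde h\|_{H^2}^2$ by a multiple of $\|h\|_{H^2(D)}^2$. Lower-order terms are then absorbed by Poincaré inequalities. For $h$ itself, Lemma \ref{lemma4} gives $\int_0^1 h\,dx_3=0$ for every $r$, so the one-dimensional Poincaré inequality in $x_3$ (constant $1/\pi^2$) gives $\|h\|_{L^2}\le\pi^{-1}\|\partial_{x_3}h\|_{L^2}$. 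For $\nabla h$: $\partial_{x_3}h$ vanishes on $\partial^H D$, so the Dirichlet Poincaré inequality in $x_3$ applies to it. $\partial_r h$ has zero $x_3$-average for every $r$ by the remark after Lemma \ref{lemma4}, so the Poincaré inequality in $x_3$ applies again. The extra $1/r$ Hessian component is already part of $\nabla^2 h$. Hence $\|\nabla h\|_{L^2}\le \pi^{-1}\|\nabla^2h\|_{L^2}$ up to a constant, and the constants $5/\pi^2$ and the large numerical coefficients in $B$ arise from these absorptions. Plugging into Lemma \ref{lemma:agmonR^3} yields the stated inequality.

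\textbf{Main obstacle.} The main difficulty is the vertical boundary. I need to verify that the odd reflection across $r=1$ of an axisymmetric function genuinely produces an $H^2$ function in $\mathbb{R}^3$, which is needed because the Cartesian Laplacian has the $\frac1r\partial_r$ term that does not reflect cleanly. I also need to keep careful, explicit track of the constants generated by the Jacobian weight and the cutoffs. If the odd reflection fails to be $H^2$ because of the $\frac1r\partial_r$ term, I would instead reflect $h$ in the variable $r$ using the one-dimensional second-order formula $h(2-r)$ with correction terms, i.e.\ a higher-order reflection $-3h(2-r)+4h(\tfrac{3-r}{2})$-type, accepting larger numerical constants.
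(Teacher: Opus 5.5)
Your proposal is correct and uses the same construction as the paper. You reflect $h=v_r/r$ evenly across $x_3=0,1$ and cut off by $\eta_1$, reflect oddly as $-h(2-r,x_3)\eta_2(r)$ across $r=1$ (Jacobian factor $r/(2-r)\le 3$), extend by zero, apply Lemma \ref{lemma:agmonR^3}, and absorb lower-order terms by Poincar\'e inequalities.

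The only real difference is that absorption step.
\begin{itemize}
\item The paper proves $\|h\|_{L^2}\le\|\nabla h\|_{L^2}$ and $\|\partial_{x_3}h\|_{L^2}\le\|\nabla^2 h\|_{L^2}$ by the weighted radial identity $\int_0^1 u^2\,r\,dr=-\int_0^1 u\,\partial_r u\,r^2\,dr$, valid since $u(1,x_3)=0$. It controls $\partial_r h$ by the Payne--Weinberger constant $\sqrt5/\pi$ of the convex cylinder, using $\int_D\partial_r h\,dx=0$. These Poincar\'e constants $1$ and $5/\pi^2$ are exactly what produce $536=494+42$ and the factor $1+5/\pi^2$ in $B$.
\item You instead use one-dimensional inequalities in $x_3$ at fixed $r$: mean zero for $h$ and $\partial_r h$ by Lemma \ref{lemma4}, and Dirichlet for $\partial_{x_3}h$. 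This is more elementary, since no three-dimensional Poincar\'e constant is needed. It is also sharper, giving $\|h\|_{L^2}\le\pi^{-1}\|\partial_{x_3}h\|_{L^2}$ and $\|\nabla h\|_{L^2}\le\pi^{-1}\|\nabla^2 h\|_{L^2}$.
\end{itemize}

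Consequently, your assertion that your absorptions yield $A=536$ and $B=57452(1+5/\pi^2)+60$ is not accurate. They yield strictly smaller constants, so the stated $C_1$ holds a fortiori. You should still carry out the product-rule bookkeeping rather than reading $A$ and $B$ off the statement.

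Your worry about the vertical boundary is unfounded, and no higher-order reflection is needed. A piecewise-$H^2$ function lies in $H^2$ as soon as its values and its full gradient are continuous across the interface. Here $h=\partial_{x_3}h=0$ on $r=1$, and $\partial_r\bigl[-h(2-r,x_3)\bigr]=(\partial_r h)(2-r,x_3)$, so both match. No matching of second derivatives is required, so the jump in $\partial_r^2h$ is harmless, and the Laplacian plays no role. The Hessian entry $\frac1r\partial_r$ of the reflected function is bounded by the corresponding entry of $h$ at $2-r$, because $r\ge 2-r$ there. This plain odd reflection is exactly what the paper uses.
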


\begin{proof}
We will first do an even extension in the $x_3$-direction and then do an odd extension in the $r$-direction for $f=v_r/r$. Then we prove some Poincar\'e inequalities. Finally we combine the extended function and the Poincar\'e inequalities with Lemma \ref{lemma:agmonR^3} to prove this lemma.

Let $f= v_r/r$ and let $\tilde{D}_1 = \{(r,\theta,x_3):0 \leq r \leq 1, 0 \leq \theta < 2\pi, -1/2 \leq x_3 \leq 3/2 \}$. Define the function on $\tilde{D}_1$ by
\be 
\tilde{f}(r,x_3,t) = 
\begin{dcases}
    f(r,-x_3,t), \quad  -1/2 \leq x_3 \leq 0\\
    f(r,x_3,t), \quad 0 \leq x_3 \leq 1\\
    f(r,2-x_3,t), \quad  1 \leq x_3 \leq 3/2. \\
\end{dcases}
\ee
We consider $F = \tilde{f}(r,x_3,t)\eta_1(x_3)$ defined on $\tilde{D}_1$. Then
\be \ba
&\| F \|^2_{L^2(\tilde{D}_1)} \leq 2 \| f \|^2_{L^2(D)}, \\
&\| \nabla F \|^2_{L^2(\tilde{D}_1)} \leq 6 \| \nabla f \|^2_{L^2(D)} + 6(\sup_{x_3}|\partial_{x_3}\eta_1|)^2  \| f \|^2_{L^2(D)} \leq 6 \| \nabla f \|^2_{L^2(D)} + 54 \| f \|^2_{L^2(D)}, 
\ea \ee
And by using formula \eqref{eq:Hessian} we have
\be \ba
\| \nabla^2 F \|^2_{L^2(\tilde{D}_1)} 
\leq & 2 \| \partial_r^2 f \|_{L^2(D)}^2 + 8 \| \partial_r\partial_{x_3} f \|_{L^2(D)}^2 + 2 \| \frac{1}{r} \partial_r f \|_{L^2(D)}^2 + 6 \| \partial_{x_3}^2 f \|_{L^2(D)}^2  
\\ & +  8 (\sup_{x_3}|\partial_{x_3}\eta_1|)^2 \| \partial_r f \|_{L^2(D)}^2  + 24 (\sup_{x_3}|\partial_{x_3}\eta_1|)^2 \| \partial_{x_3} f \|_{L^2(D)}^2 \\ & + 6 (\sup_{x_3}|\partial_{x_3}^2\eta_1|)^2 \| f \|_{L^2(D)}^2.
\\& \leq 3456\| f \|^2_{L^2(D)}  + 216\| \nabla f \|^2_{L^2(D)} + 6 \| \nabla^2 f \|^2_{L^2(D)}.
\ea \ee
Next, we further extend $F$ in the $r$-direction. Let $\tilde{D}_2 = \{(r,\theta,x_3):0 \leq r \leq 3/2, 0 \leq \theta < 2\pi, -1/2 \leq x_3 \leq 3/2 \} $. Define $G$ on $\tilde{D}_2$ by
\be
G(r,x_3,t)=
\begin{cases}
    F(r,x_3,t), \quad 0\leq r\leq 1 \\
    -F(2-r,x_3,t)\eta_2(r), \quad 1 \leq r \leq 3/2 \\
    0, \quad r \geq 3/2.
\end{cases}
\ee
The integral of $G$ is still $\int G \, rdrdx_3$. Because of the weight $r$ in the cylindrical integral, we have
\be
\ba
\| G \|_{L^2(\tilde{D}_2)}^2 = \| G \|_{L^2(\tilde{D}_1)}^2 + \| G \|_{L^2(1\leq r \leq 3/2)}^2 \leq \| F \|_{L^2(\tilde{D}_1)}^2 + 3 \| F \|_{L^2(\tilde{D}_1)}^2 = 4\| F \|_{L^2(\tilde{D}_1)}^2.
\ea
\ee
Similarly we have
\be
\| \nabla  G \|_{L^2(\tilde{D}_2)}^2 \leq 7 \| \nabla F \|_{L^2(\tilde{D}_1)}^2 + 6 (\sup_r |\partial_r \eta_2|)^2 \| F \|_{L^2(\tilde{D}_1)}^2.
\ee
And again by using formula \eqref{eq:Hessian} we have
\be
\ba
& \| \nabla^2 G \|_{L^2(\tilde{D}_2)}^2 \\
& \leq \| \nabla^2 F \|_{L^2(\tilde{D}_1)}^2 + 3 \| \nabla^2 F(2-r,\cdot,\cdot) \|_{L^2(1 \leq r \leq 3/2)}^2 + 6 (\sup_r |\partial_r \eta_2|)^2 \| \nabla F(2-r,\cdot,\cdot) \|_{L^2(1 \leq r \leq 3/2)}^2
 \\ & + (2 (\sup_r |\partial_r \eta_2|)^2 + 3 (\sup_r  |\partial_r^2 \eta_2|)^2 ) \|  F(2-r,\cdot,\cdot) \|_{L^2(1 \leq r \leq 3/2)}^2 \\
 & \leq 10 \| \nabla^2 F \|_{L^2(\tilde{D}_1)}^2 + 18 (\sup_r |\partial_r \eta_2|)^2 \| \nabla F \|_{L^2(\tilde{D}_1)}^2 + 3(2 (\sup_r |\partial_r \eta_2|)^2 + 3 (\sup_r  |\partial_r^2 \eta_2|)^2 ) \|  F \|_{L^2(\tilde{D}_1)}^2
\ea
\ee

On the other hand, since $v_r(1,x_3,t)=0$, we have
\begin{equation} \begin{aligned}
    &\int_0^1 \int_0^1 \left|\frac{v_r}{r} \right|^2 \, rdrdx_3 
    \\ &= \int_0^1 \int_0^1 \left|\frac{v_r}{r}\right|^2 \frac{d}{dr}\frac{r^2}{2} \, drdx_3 = - \int_0^1 \int_0^1 \frac{v_r}{r} \left( \partial_r \frac{v_r}{r} \right)  r^2 \,drdx_3 
    \\ &\leq  \left(\int_0^1\int_0^1  \left|\partial_r \frac{v_r}{r}\right|^2 \, rdrdx_3 \right)^{1/2} \left( \int_0^1\int_0^1 \left|\frac{v_r}{r}\right|^2 \, rdrdx_3 \right)^{1/2}.
\end{aligned} \end{equation}
That is 
\begin{equation} \begin{aligned}
 \Lnorm{\frac{v_r}{r}}{2} \leq  \Lnorm{\nabla \frac{v_r}{r}}{2}.
\end{aligned} \end{equation}
The same proof will also work for $\partial_{x_3}\frac{v_r}{r}$ since by the boundary condition $v_r(1,x_3,t)=0$ hence $\partial_{x_3}\frac{v_r}{r}(1,x_3,t)=0$. Then
\begin{equation} \begin{aligned}
    &\int_0^1 \int_0^1 \left| \partial_{x_3}\frac{v_r}{r} \right|^2 \, rdrdx_3 
    \\ &= \int_0^1 \int_0^1 \left| \partial_{x_3}\frac{v_r}{r} \right|^2 \frac{d}{dr}\frac{r^2}{2} \, drdx_3 = - \int_0^1\int_0^1 \partial_{x_3}\frac{v_r}{r} \left( \partial_r \partial_{x_3}\frac{v_r}{r} \right)  r^2 \,drdx_3 
    \\ &\leq  \left(\int_0^1\int_0^1 \left|\partial_r \partial_{x_3}\frac{v_r}{r} \right|^2 \, rdrdx_3 \right)^{1/2} \left( \int_0^1  \int_0^1 \left|\partial_{x_3}\frac{v_r}{r} \right|^2 \, rdrdx_3 \right)^{1/2}.
\end{aligned} \end{equation}
From this we obtain
\begin{equation} \begin{aligned}
  \Lnorm{\partial_{x_3}\frac{v_r}{r}}{2}\leq  \Lnorm{\nabla^2 \frac{v_r}{r}}{2}.
  \label{eq:lemma_agmon_1}
\end{aligned} \end{equation}

Since we know from Lemma \ref{lemma4} that $\int_D \partial_r \frac{v_r}{r} \, dx =0$, we can apply the Poincar\'e inequality and get
\begin{equation}
    \norm{\partial_r \frac{v_r}{r}}_{L^2} \leq C_{P}\norm{\nabla \partial_r \frac{v_r}{r} }_{L^2} \leq C_{P}\norm{\nabla^2 \frac{v_r}{r} }_{L^2}
    \label{eq:lemma_agmon_3}
\end{equation}
where $C_P$ is the constant in the Poincar\'e inequality.

Combining \eqref{eq:lemma_agmon_1} and \eqref{eq:lemma_agmon_3} we conclude that
\begin{equation}
    \norm{\nabla \frac{v_r}{r}}_{L^2}^2 = \norm{\partial_r \frac{v_r}{r}}_{L^2}^2 + \norm{\partial_{x_3} \frac{v_r}{r}}_{L^2}^2 \leq (1+C_{P}^2) \norm{\nabla^2 \frac{v_r}{r} }_{L^2}^2 \leq (1 + 5/\pi^2)\norm{\nabla^2 \frac{v_r}{r} }_{L^2}^2,
\end{equation}
where we have used the estimate for $C_p$ in $D$: $C_p \le \sqrt{5}/\pi$ \cite{payne1960optimal}.

By construction of $G$, we can extend $G$ to $\mathbb{R}^3$ by zero extension outside $\tilde{D}_2$ so that the extended function, denoted by $\tilde{G}$, is in $H^2(\mathbb{R}^3)$. Then by Lemma \ref{lemma:agmonR^3}, we have 
\begin{equation} \begin{aligned}
    \Lnorm{\tilde{G}}{\infty(\mathbb{R}^3)} 
    & \leq (1+\sqrt{2}) 2 \sqrt{\pi} \norm{\tilde{G}}_{H^1(\mathbb{R}^3)}^{1/2} \norm{\tilde{G}}_{H^2(\mathbb{R}^3)}^{1/2} 
    \\ & = (1+\sqrt{2}) 2 \sqrt{\pi} \norm{G}_{H^1(\tilde{D}_2)}^{1/2} \norm{G}_{H^2(\tilde{D}_2)}^{1/2}
\end{aligned} \end{equation}
Then we have
\begin{equation} \begin{aligned}
    \Lnorm{\frac{v_r}{r}}{\infty(D)} &\leq \Lnorm{\tilde{G}}{\infty(\mathbb{R}^3)} 
    \\ & \leq (1+\sqrt{2}) 2 \sqrt{\pi} \norm{G}_{H^1(\tilde{D}_2)}^{1/2} \norm{G}_{H^2(\tilde{D}_2)}^{1/2}  
    \\ & \leq (1+\sqrt{2}) 2 \sqrt{\pi} (536)^{1/4}(57452(1+5/\pi^2) + 60)^{1/4} \Lnorm{\nabla \frac{v_r}{r}}{2(D)}^{1/2} \Lnorm{\nabla^2 \frac{v_r}{r}}{2(D)}^{1/2}.
\end{aligned} \end{equation}
\end{proof}

\section{Proof of Theorem \ref{thm:main}}
We will start with two energy estimates on $\Omega$ and $V$. The local existence of solution to $\eqref{eq:ASNS}$ under the conditions of Theorem \ref{thm:main} is known \cite{mitrea2009nonlinear}. There exists a strong solution $v$ to $\eqref{eq:ASNS}$ in $D \times [0,T^*]$ where $T^*$ may be small, and we can use $V$ and $\Omega$ as test functions.

\subsection{Energy estimate for $V$ and $\Omega$} We apply the $L^4$ energy estimate to \eqref{eq:VOmega}. We multiply \eqref{eq:VOmega} by $V^3$ and integrate in space and time to get
\begin{equation} \begin{aligned}
 &\frac{1}{4} \ddt \Lnorm{V^2}{2}^2 + \frac{3}{4} \int_D |\nabla V^2|^2 \, dx  + \frac{3}{4} \int_D \frac{1}{r^2} V^4 \, dx \\ &= \int_D \nabla \cdot(\nabla V (V^3)) \, dx - \frac{3}{2} \int_D \frac{v_r}{r} V^4 \, dx + \frac{1}{4} \int_0^1 V^4(1,x_3,t) \, dx_3.
 \label{eq:ES_V_1}
\end{aligned} \end{equation}
We note that we have an extra boundary term $\frac{1}{4} \int_0^1 V^4(1,x_3,t) \, dx_3$ on the right but this can be handled. First due to the boundary conditions of $V$, we have $\partial_r V(1,x_3,t) = -\frac{3}{2}V(1,x_3,t)$. Then we have
\begin{equation} \begin{aligned}
\int_D \nabla \cdot(\nabla V (V^3)) \, dx = \int_0^1 \partial_r V(1,x_3,t) (V^3(1,x_3,t)) \, dx_3 = -\frac{3}{2}\int_0^1  V^4(1,x_3,t) \, dx_3.
\label{eq:ES_V_2}
\end{aligned} \end{equation}
Hence the boundary term $\frac{1}{4} \int_0^1 V^4(1,x_3,t) \, dx_3$ can be absorbed, the only term on the right that we need to control is $- \frac{3}{2} \int_D \frac{v_r}{r} V^4 \, dx$.


Next, we apply the Lemma \ref{lemma:agmon_inequality} and Lemma \ref{lemma1} to obtain
\begin{equation} \begin{aligned}
\Lnorm{\frac{v_r}{r}}{\infty} &\leq C_1 \Lnorm{\nabla \frac{v_r}{r}}{2}^{1/2} \Lnorm{\nabla^2 \frac{v_r}{r}}{2}^{1/2} \\
&\leq C_1 C_3^{1/2} \Lnorm{\Omega}{2}^{1/2}\Lnorm{\partial_{x_3}\Omega}{2}^{1/2}.
\end{aligned} \end{equation}
Also using H\"{o}lder's inequality, we obtain
\begin{equation} \begin{aligned}
\int_0^1 \int_0^1 \frac{v_\theta^4}{r^2}\,rdrdx_3 &= \int_0^1 \int_0^1 \left(\frac{v_\theta^4}{r^3}\right)^{3/4} \left( v_\theta^4r^5\right)^{1/4} \, drdx_3 \\
& \leq \left( \int_0^1 \int_0^1  \frac{v_\theta^4}{r^3} \, drdx_3 \right)^{3/4} \left( \int_0^1 \int_0^1  v_\theta^4 r^5 \, drdx_3 \right)^{1/4}
\\ & = \left( \int_0^1 \int_0^1  \frac{v_\theta^4}{r^4} \, rdrdx_3 \right)^{3/4} \left( \int_0^1 \int_0^1  v_\theta^4 r^4 \, rdrdx_3 \right)^{1/4},
\end{aligned} \end{equation}
that is 
\begin{equation} \begin{aligned}
\int_D V^4 \,dx \leq \Lnorm{r^{-1}V^2}{2}^{3/2} \Lnorm{\Gamma}{4}.
\end{aligned} \end{equation}
Then
\begin{equation} \begin{aligned}
- \frac{3}{2} \int_D \frac{v_r}{r} V^4 \, dx 
&\leq \frac{3}{2}C_1 C_3^{1/2} \Lnorm{\Omega}{2}^{1/2}\Lnorm{\partial_{x_3}\Omega}{2}^{1/2} \int_D V^4 \, dx \\
& \leq \frac{3}{2}C_1 C_3^{1/2} \Lnorm{\Omega}{2}^{1/2} \Lnorm{\partial_{x_3}\Omega}{2}^{1/2} \Lnorm{r^{-1}V^2}{2}^{3/2} \Lnorm{\Gamma}{4} \\
& \leq \frac{3}{2}C_1 C_3^{1/2} \Lnorm{\Omega}{2}^{1/2}  \Lnorm{\Gamma}{4} \left( \frac{1}{4} \Lnorm{\partial_{x_3}\Omega}{2}^{2} + \frac{3}{4} \Lnorm{r^{-1}V^2}{2}^{2} \right) \\
& \leq \frac{9}{8}C_1 C_3^{1/2}\Lnorm{\Omega}{2}^{1/2}  \Lnorm{\Gamma}{4} \left(  \Lnorm{\partial_{x_3}\Omega}{2}^{2} +  \Lnorm{r^{-1}V^2}{2}^{2} \right)
\label{eq:ES_V_3}
\end{aligned} \end{equation}
where Young's inequality is used from the second to the third line.

Hence putting \eqref{eq:ES_V_2} and \eqref{eq:ES_V_3} into \eqref{eq:ES_V_1}, we get
\begin{equation} \begin{aligned}
 &\frac{1}{4} \ddt \Lnorm{V^2}{2}^2 + \frac{3}{4} \int_D |\nabla V^2|^2 \, dx + \frac{3}{4} \int_D \frac{1}{r^2} V^4 \, dx + \frac{5}{4}\int_0^1  V^4(1,x_3,t) \, dx_3
 \\ & \leq \frac{9}{8}C_1 C_3^{1/2} \Lnorm{\Omega}{2}^{1/2}  \Lnorm{\Gamma}{4} \left( \Lnorm{\partial_{x_3}\Omega}{2}^{2} + \Lnorm{r^{-1}V^2}{2}^{2} \right).
 \label{eq:ES_V_4}
\end{aligned} \end{equation}

Using $\Omega$ as a test function in \eqref{eq:VOmega}, we obtain
\begin{equation} \begin{aligned}
&\frac{1}{2} \ddt \Lnorm{\Omega}{2}^2 + \Lnorm{\nabla \Omega}{2}^2 + \int_0^1 \Omega^2(0,x_3,t)\,dx_3 \\ &= \int_D \frac{\partial_{x_3}V^2}{r} \Omega \, dx = - \int_D \partial_{x_3}\Omega \frac{V^2}{r} \, dx \leq \frac{1}{2} \Lnorm{\partial_{x_3}\Omega}{2}^2 + \frac{1}{2} \Lnorm{r^{-1}V^2}{2}^2,
\label{eq:ES_Omega_1}
\end{aligned} \end{equation}
where we integrate by parts and use the boundary condition of $\Omega$ in the last step.

\subsection{Boundedness of $\norm{V^2(t)}_{L^2}^2$ and $\norm{\Omega(t)}_{L^2}^2$ under the smallness condition \eqref{eq:smallness_condition}} Adding \eqref{eq:ES_V_4} and \eqref{eq:ES_Omega_1} gives
\begin{equation} \begin{aligned}
&\frac{1}{4} \ddt \Lnorm{V^2}{2}^2 + \frac{1}{2} \ddt \Lnorm{\Omega}{2}^2 + \frac{3}{4} \int_D |\nabla V^2|^2 \, dx + \Lnorm{\nabla \Omega}{2}^2 \\& + \frac{5}{4}\int_0^1  V^4(1,x_3,t) \, dx_3 + \int_0^1 \Omega^2(0,x_3,t)\,dx_3 +  \frac{3}{4} \Lnorm{r^{-1}V^2}{2}^2   \\
&\leq \frac{1}{2} \Lnorm{\partial_{x_3}\Omega}{2}^2 + \frac{1}{2} \Lnorm{r^{-1}V^2}{2}^2 + \frac{9C_1 C_3^{1/2}}{8} \Lnorm{\Omega}{2}^{1/2}  \Lnorm{\Gamma}{4} ( \Lnorm{\partial_{x_3}\Omega}{2}^{2} + \Lnorm{r^{-1}V^2}{2}^{2})\\
&\leq \frac{1}{2} \Lnorm{\partial_{x_3}\Omega}{2}^2 + \frac{1}{2} \Lnorm{r^{-1}V^2}{2}^2 + \frac{9C_1 C_3^{1/2}}{8} \Lnorm{\Omega}{2}^{1/2}  \Lnorm{\Gamma_0}{4} ( \Lnorm{\partial_{x_3}\Omega}{2}^{2} + \Lnorm{r^{-1}V^2}{2}^{2}).
\label{eq:energy_estimate_VOmega}
\end{aligned} \end{equation}
Since the solution is strong, there exists $T^{**}\leq T^*$ such that $ \Lnorm{\Omega(t)}{2}^{1/2} \leq 2 \Lnorm{\Omega_0}{2}^{1/2}$ for $t\in (0,T^{**})$. If we can impose the following smallness condition
\begin{equation} \begin{aligned}
 \frac{9C_1 C_3^{1/2}}{4} \left(\frac12 \norm{V_0^2}_{L^2}^2 +  \Lnorm{\Omega_0}{2}^{2} \right)^{1/4} \Lnorm{\Gamma_0}{4} \leq \frac{1}{4}.
 \label{eq:same_smallnesss_condition}
\end{aligned} \end{equation}
Then from \eqref{eq:energy_estimate_VOmega} we have
\begin{equation} \begin{aligned}
&\frac{1}{4} \ddt \Lnorm{V^2}{2}^2 + \frac{1}{2} \ddt \Lnorm{\Omega}{2}^2 + \frac{3}{4} \int_D |\nabla V^2|^2 \, dx + \Lnorm{\nabla \Omega}{2}^2 \\& +  \frac{5}{4}\int_0^1  V^4(1,x_3,t) \, dx_3 + \int_0^1 \Omega^2(0,x_3,t)\,dx_3 +  \frac{3}{4} \Lnorm{r^{-1}V^2}{2}^2   
\\ & \leq  \frac{1}{2} \Lnorm{\partial_{x_3}\Omega}{2}^2 + \frac{1}{2} \Lnorm{r^{-1}V^2}{2}^2 + \frac{9C_1 C_3^{1/2}}{8} \Lnorm{\Omega}{2}^{1/2}  \Lnorm{\Gamma_0}{4} ( \Lnorm{\partial_{x_3}\Omega}{2}^{2} + \Lnorm{r^{-1}V^2}{2}^{2}) 
\\ & \leq  \frac{1}{2} \Lnorm{\partial_{x_3}\Omega}{2}^2 + \frac{1}{2} \Lnorm{r^{-1}V^2}{2}^2 + \frac{9C_1 C_3^{1/2}}{4}( \Lnorm{\Omega_0}{2}^{2} )^{1/4}  \Lnorm{\Gamma_0}{4} ( \Lnorm{\partial_{x_3}\Omega}{2}^{2} + \Lnorm{r^{-1}V^2}{2}^{2}) 
\\ & \leq  \frac{1}{2} \Lnorm{\partial_{x_3}\Omega}{2}^2 + \frac{1}{2} \Lnorm{r^{-1}V^2}{2}^2 + \frac{9C_1 C_3^{1/2}}{4} \left( \frac12 \norm{V_0^2}_{L^2}^{2} + \Lnorm{\Omega_0}{2}^{2}  \right)^{1/4} \Lnorm{\Gamma_0}{4}  ( \Lnorm{\partial_{x_3}\Omega}{2}^{2} + \Lnorm{r^{-1}V^2}{2}^{2}) 
\label{eq:after_smallness}
\end{aligned} \end{equation}

Then 
\begin{equation} \begin{aligned}
\label{eq:final_energy_estimate}
 \frac{1}{4} \ddt \Lnorm{V^2(t)}{2}^2 + \frac{1}{2} \ddt \Lnorm{\Omega(t)}{2}^2\leq 0 \quad \text{for $t\in (0,T^{**})$.}
\end{aligned} \end{equation}
The smallness condition \eqref{eq:same_smallnesss_condition} is global since from \eqref{eq:after_smallness}, we have for $t \in [0,T^{**})$
\begin{equation}
    \frac14 \norm{V^2(t)}_{L^2}^2 + \frac12 \norm{\Omega(t)}_{L^2}^2 \leq \frac14 \norm{V_0^2}_{L^2}^2 + \frac12 \norm{\Omega_0}_{L^2}^2.
\end{equation}
Then we have
\begin{equation}
    \norm{\Omega(T^{**})}_{L^2}^{1/2} \leq \left( \frac12 \norm{V_0^2}_{L^2}^2 + \norm{\Omega_0}_{L^2}^2 \right)^{1/4}
    \label{eq:smallness_after_time}
\end{equation}
Performing the same analysis again, we will have \eqref{eq:energy_estimate_VOmega} where $t \in [T^{**}, T^{***})$ for some $T^{***}$. For the last term in \eqref{eq:energy_estimate_VOmega}, after imposing the same smallness condition \eqref{eq:same_smallnesss_condition} and using \eqref{eq:smallness_after_time} we get
\begin{equation} \begin{aligned}
    & \frac{9C_1 C_3^{1/2}}{8} \Lnorm{\Omega}{2}^{1/2}  \Lnorm{\Gamma_0}{4} ( \Lnorm{\partial_{x_3}\Omega}{2}^{2} + \Lnorm{r^{-1}V^2}{2}^{2}) 
    \\ &\leq  \frac{9C_1 C_3^{1/2}}{4} \Lnorm{\Omega(T^{**})}{2}^{1/2}  \Lnorm{\Gamma_0}{4} ( \Lnorm{\partial_{x_3}\Omega}{2}^{2} + \Lnorm{r^{-1}V^2}{2}^{2}) 
    \\ &\leq \frac{1}{4}  ( \Lnorm{\partial_{x_3}\Omega}{2}^{2} + \Lnorm{r^{-1}V^2}{2}^{2}) 
\end{aligned} \end{equation}
thus the same conclusion \eqref{eq:final_energy_estimate} holds for $t \in [T^{**},T^{***})$. The solution can be continued again. Hence the smallness condition \eqref{eq:same_smallnesss_condition} is a global condition.

Concluding by the continuation principle, $||\Omega(t)||^2_{L^2}$ can be continued for any $t>0$. This shows that $||\Omega(t)||_{L^2} < \infty$ for any $t \geq 0$. 

\subsection{$L^4$ bound for $b$} Since the normal component of $b$ vanishes, from the inequality $\norm{\nabla b}_{L^2}^2 \leq C\norm{ \operatorname{div} \, b}_{L^2}^2 + C \norm{\operatorname{curl} \, b}_{L^2}^2$ \cite{von1992estimating}, we see that
\begin{equation} \begin{aligned}
\norm{\nabla b}_{L^2}^2 \leq C \norm{\operatorname{curl} \, b}_{L^2}^2 = C \norm{\omega_\theta(t)}_{L^2}^2 \leq C \norm{\Omega(t)}_{L^2}^2 < \infty.
\end{aligned}
\end{equation}
On $(0,T^*]$, by the energy inequality we have $\| b \|_{L^2} \leq \|v_0\|_{L^2} < \infty$.
Hence for $t\in(0,T^*]$
\begin{equation} \begin{aligned}
v_re_r + v_3e_3 = b \in L^4.
\end{aligned}
\end{equation}
By the continuation principle, this implies that for any $t \geq 0$, $v_r \in L^4$ and $v_3 \in L^4$.

\subsection{$L^4$ bound for $v_\theta$} 
We multiply $v_\theta^3$ to the equation of $v_\theta$ in \eqref{eq:ASNS}. The Laplace term can be computed using the boundary condition in the following way
\begin{equation}\begin{aligned}
    \int_D v_\theta^3 \, \Delta v_\theta \, dx & = \int_D \nabla\cdot(v_\theta^3 \, \nabla v_\theta ) - \frac34 \int_D |\nabla( v_\theta^2)|^2 \, dx 
    \\ & = - \int_{\partial^V D} v_\theta^4 \, dx_3 - \frac34 \int_D |\nabla( v_\theta^2)|^2 \, dx .
\end{aligned}\end{equation}
Computing the other terms we have
\begin{equation}
    \frac{1}{4}\ddt \norm{v_\theta}_{L^4}^4 + \frac{3}{4} \norm{\nabla v_\theta^2}_{L^2}^2 + \norm{r^{-1} v_\theta^2}_{L^2}^2 + \int_0^1 v_\theta^4(1,x_3,t) \, dx_3  \leq  \int_D \frac{|v_r|}{r} v_\theta^4 \, dx.
\end{equation}
Using H\"{o}lder's inequality, Cauchy–Schwarz inequality and the fact that $rv_\theta = \Gamma$ we have,
\begin{equation} \begin{aligned}
\int_0^1 \int_0^1 \frac{|v_r|}{r} v_\theta^4 \, r dr dx_3 &=  \int_0^1 \int_0^1 \frac{|v_r|}{r} \frac{v_\theta^2}{r} v_\theta^2 \, r^2 dr dx_3 \\
& \leq \norm{\Gamma_0}_{L^\infty} \int_0^1 \int_0^1 \frac{|v_r|}{r} \frac{v_\theta^2}{r} v_\theta \, r dr dx_3 \\
& \leq \norm{\Gamma_0}_{L^\infty} \left[ \int_D \left( \frac{v_r}{r} \right)^4 dx \right]^\frac{1}{4}   \left[ \int_D \left( \frac{v_\theta^2}{r} \right)^2 dx \right]^\frac{1}{2}   \left[ \int_D v_\theta^4 dx \right]^\frac{1}{4} \\
& \leq \frac{1}{2}  \left[ \int_D \left( \frac{v_\theta^2}{r} \right)^2 dx \right] + \frac{1}{2} \norm{\Gamma_0}_{L^\infty}^2 \left[ \int_D \left( \frac{v_r}{r} \right)^4 \, dx \right]^\frac{1}{2}  \left[ \int_D v_\theta^4 \, dx \right]^\frac{1}{2} \\
& \leq \frac{1}{2}  \norm{r^{-1} v_\theta^2}_{L^2}^2 + \frac{1}{2}  \norm{\Gamma_0}_{L^\infty}^2 \norm{\frac{v_r}{r}}_{L^4}^2 \norm{v_\theta}_{L^4}^2.
\end{aligned}
\end{equation}
Then it follows from Lemma \ref{lemma1}, Sobolev inequality and Poincar\'e inequality that
\begin{equation} \begin{aligned}
 &\frac{1}{4} \ddt \norm{v_\theta}_{L^4}^4 \\ &\leq  \frac{1}{2} \norm{\Gamma_0}_{L^\infty}^2 \norm{\frac{v_r}{r}}_{L^4}^2 \norm{v_\theta}_{L^4}^2
 \\ & \leq \frac{C}{2} \norm{\Gamma_0}_{L^\infty}^2 \norm{\frac{v_r}{r}}_{L^6}^2 
 \norm{v_\theta}_{L^4}^2 
 \\ & \leq \frac{C}{2} \norm{\Gamma_0}_{L^\infty}^2 \left( \norm{\frac{v_r}{r}}_{L^2}^2 + \norm{\nabla \frac{v_r}{r}}_{L^2}^2  \right) \norm{v_\theta}_{L^4}^2 
 \\ & \leq C\norm{\Gamma_0}_{L^\infty}^2 \norm{\nabla \frac{v_r}{r}}_{L^2}^2  \norm{v_\theta}_{L^4}^2 
 \\ &\leq C \norm{\Gamma_0}_{L^\infty}^2 \norm{\Omega(t)}_{L^2}^2 \norm{v_\theta}_{L^4}^2 
 \\ & \leq C \norm{v_\theta}_{L^4}^2.
\end{aligned} \end{equation}
Let $f = \norm{v_\theta}_{L^4}^4$, we have a differential inequality 
\begin{equation}
\begin{aligned}
    \ddt f \leq 4C \sqrt{f}.
\end{aligned}
\end{equation}
Solving for this inequality we obtain,
\begin{equation}
    \sqrt{f(t)} \leq 2Ct + \sqrt{f(0)} \quad \forall  \, t\in [0,T],
\end{equation}
That is 
\begin{equation}
    \norm{v_\theta}_{L^4}^2 \leq 2Ct + \norm{v_{\theta,0}}_{L^4}^2 < \infty \quad \forall  \, t\in [0,T].
\end{equation}

\subsection{Completion of the proof} We have shown that $v_r$, $v_\theta$ and $v_3$ are all in $L_T^\infty L_x^4$ for arbitrary $T$. Hence by the Ladyzhenskaya--Prodi--Serrin regularity criterion of \cite{serrin1962interior,prodi1959teorema,ladyzhenskaya1967uniqueness}, $v$ is regular up to any finite time $T$ in the interior of $D$. 

Also it is only possible for a blow-up to occur on $r=0$ by the result in \cite{caffarelli1982partial}. Hence we can rule out such blow up except for the case on $ \{ r=0 \} \cap \partial^H D$. But by the result in \cite{neustupa2018regularity}, for NHL boundary condition, the interior regularity criterion also implies regularity on $\partial^H D$. So $v$ is regular up to any finite time $T$ in $D$.  \qed

\section*{Acknowledgments} We are deeply grateful to Professors Qi S. Zhang, Zijin Li, Xinghong Pan, Xin Yang, Na Zhao and Daoguo Zhou for the helpful discussions and suggestions.

\bibliographystyle{plain}

\end{document}